\author{Anand Deopurkar}
\address{Anand Deopurkar, Mathematical Sciences Institute, Australian National University}
\email{anand.deopurkar@anu.edu.au}
\author{Anand Patel}
\address{Anand Patel, Department of Mathematics, Oklahoma State University}
\email{anand.patel@okstate.edu}
\date{\today}
\else\declaretheorem[parent=section]{theorem}\fi
\else\declaretheorem[sibling=theorem]{corollary}\fi
\else\declaretheorem[sibling=theorem]{lemma}\fi
\else\declaretheorem[sibling=theorem]{proposition}\fi
\else\declaretheorem[sibling=theorem, style=remark]{remark}\fi
\renewcommand {\P}{{\bf P}}
\providecommand {\Gr}{{\bf Gr}}
\providecommand {\A}{{\bf A}}
\providecommand{\GL}{\operatorname{GL}}
\providecommand{\PGL}{\operatorname{PGL}}
\providecommand {\from}{{\colon}}
\providecommand{\coker}{\operatorname{coker}}
\providecommand{\Hom}{\operatorname{Hom}}
\providecommand{\Sym}{\operatorname{Sym}}
\providecommand{\im}{\operatorname{im}}
\providecommand{\supp}{\operatorname{supp}}
 \renewcommand{\k}{\mathbf k}
\DeclareMathOperator{\Orb}{Orb}
\DeclareMathOperator{\WOrb}{WOrb}
\DeclareMathOperator{\Stab}{Stab}
\title{Orbits of linear series on the projective line}
\begin{document}

\maketitle

\begin{abstract}
  We compute the equivariant fundamental class of the orbit closure of a linear series on the projective line.
  We also describe the boundary of the orbit closure and how the orbits specialise in one parameter families.
\end{abstract}

\section{Introduction}
Let \(M\) be a variety with the action of an algebraic group \(G\).
Given a point \(m \in M\), let \(\Orb(m) \subset M\) be the closure of the \(G\)-orbit of \(m\).
The fundamental class of \(\Orb(m)\) in the equivariant Chow ring of \(M\) encodes fascinating geometric information.
The class is particularly meaningful when the \(G\)-orbits have geometric significance.
The main result of this paper (\Cref{thm:main}) is a complete and explicit expression for these classes for \(M = \Hom(\k^{r+1}, \Sym^d \k^2)\) with the natural action of \(G = \GL(r+1) \times \GL(2)\).
Here \(\k\) is an algebraically closed field in which \(d! \neq 0\).

The orbits of the \(\GL(r+1) \times \GL(2)\) action on \(\Hom(\k^{r+1}, \Sym^d \k^2)\) represent isomorphism classes of linear series of rank \(r\) and degree \(d\) on the projective line.
Their fundamental class turns out to depend only on the ramification profile.
For example, when \(r = 1\) and the linear series is base-point free, the class is a function of the multiplicities in the ramification divisor.
Likewise, when \(r = 2\) and the linear series is base-point free, the class depends on the multiplicities of cusps and hyperflexes.

The group \(\GL(r+1)\) acts freely on the open subset of \(\Hom(\k^{r+1}, \Sym^d\k^2)\) consisting of injective maps.
The quotient is the Grassmannian \(\Gr(r+1, \Sym^d\k^2)\).
So we have a surjective homomorphism
\begin{equation}\label{eq:homgrass}
  A^*_{\GL(r+1) \times \GL(2)}(\Hom(\k^{r+1}, \Sym^d\k^2)) \to A^*_{\GL(2)}(\Gr(r+1, \Sym^d\k^2)).
\end{equation}
Taking images under this map gives classes of \(\GL(2)\)-orbits in the Grassmannian.
Forgetting the group action gives a further homomorphism
\begin{equation}\label{eq:homnon}
  A^*_{\GL(2)}(\Gr(r+1, \Sym^d\k^2)) \to A^*(\Gr(r+1, \Sym^d\k^2)).
\end{equation}
Taking images under this map gives non-equivariant classes of the \(\GL(2)\)-orbits.
All of these, including the non-equivariant ones, were unknown.
\Cref{tab:quarticpencils} shows the weighted (multiplied by the order of the stabiliser) orbit classes for all base-point free pencils of quartics.
\begin{table}
  \begin{tabular}{p{.3\textwidth}p{.5\textwidth}} 
    \toprule
    Ramification profile & Weighted orbit class in the Schubert basis \\
    \midrule
   \(2,2,2,2,2,2\) & \(24 \cdot \Omega_3 + 48 \cdot \Omega_{2,1}\) \\
    \(3,2,2,2,2\) & \(16 \cdot \Omega_3 + 40 \cdot \Omega_{2,1}\) \\
    \(4,2,2,2\) & \(12 \cdot \Omega_3 + 24 \cdot \Omega_{2,1}\) \\
    \(3,3,2,2\) & \(8 \cdot \Omega_3 + 32 \cdot \Omega_{2,1}\)\\
    \(3,3,3\) & \(24 \cdot \Omega_{2,1}\) \\
    \(4,3,2\) & \(4 \cdot \Omega_3 + 16 \cdot \Omega_{2,1}\)\\
    \bottomrule
  \end{tabular}
  \caption{The weighted orbit classes of base-point free pencils of quartics in \(\Gr(2, \Sym^4\k^2)\) as a function of the ramification profile.}
  \label{tab:quarticpencils}
\end{table}

Many mathematicians have studied equivariant and non-equivariant orbit classes.
We were inspired mainly by the work of Aluffi and Faber \cite{alu.fab:00,alu.fab:93,alu.fab:93*1} on the (non-equivariant) classes of orbits of points on \(\P^{1}\) and curves in \(\P^{2}\).
There has been exciting recent progress on extending their work to the equivariant setting; see \cite{lee.pat.tse:19} for points on \(\P^{1}\) and quartic curves in \(\P^2\), \cite{lee.pat.spi.ea:20} for hyperplane arrangements in \(\P^n\), and \cite{deo.pat.tse:21} for generic cubics in \(\P^3\).
There is also significant work in a more representation theoretic context.
See, for example, \cite{feh.rim.web:20,buc.rim:07} for classes of orbits of quiver representations and \cite{ber.fin:17} for orbits of matrices.

\subsection{Main theorem}
To state the main result, we need some notation.
We have an identification
\begin{equation}\label{eq:A}
  A^{*}_{\GL(r+1) \times \GL(2)}(\Hom(\k^{r+1}, \Sym^d \k^2))= \mathbf{Z}[\mu_{0},\dots,\mu_{r}, \omega_{1}, \omega_2]^{S_{r+1} \times S_{2}},
\end{equation}
where the \(\mu_{i}\)'s and the \(\omega_{j}\)'s are the ``Chern roots'' of the tautological representations of \(\GL(r+1)\) and \(\GL(2)\), respectively.
Given integers \(a_{0}< \dots < a_{r}\) in \(\{0,\dots,d\}\), set 
\[ \phi_{a_0, \dots, a_r}(\mu;\omega) = \prod_{j = 0}^r\prod_{i}((d-i) \omega_1 + i \omega_2-\mu_{j}),\]
where the index \(i\) ranges over \(\{0,\dots,d\} - \{a_0, \dots, a_r\}\).
Set
\[ \psi_{a_0,\dots,a_r}(\mu;\omega) = \phi_{a_0, \dots, a_r}(\mu;\omega_1,\omega_2) - \phi_{a_0, \dots, a_r}(\mu;\omega_2,\omega_1).\]
Note that \(\psi\) is symmetric in the \(\mu\)'s and anti-symmetric in the \(\omega\)'s.

Given an injective \(s \in \Hom(\k^{r+1}, \Sym^d \k^2)\), let \(S \subset \Sym^{d}\k^2\) be its image.
Associated to \(S\) and a point \(p \in \P^{1}\), we have the vanishing sequence
\[a_{0}(p) < \dots < a_{r}(p)\]
characterised by the property that for each \(i\), the space \(S\) contains a section vanishing to order exactly \(a_{i}(p)\) at \(p\).
For all but finitely many points \(p \in \P^{1}\), the vanishing sequence is given by \(a_{i}(p) = i\).
Let \(B \subset \P^{1}\) be the points where it differs.
The ramification profile of \(S\) is the multi-set
\[\{(a_0(b), \dots, a_{r}(b)) \mid b \in B\}.\]
\begin{theorem}[\Cref{thm:realmain}]\label{thm:main}
  Let \(\k\) be an algebraically closed field in which \(d! \neq 0\).
  Let \(s \in \Hom(\k^{r+1}, \Sym^d  \k^2)\) be an injective map with image \(S \subset \Sym^d\k^2\).
  Let \(\{(a_{i}(b)) \mid b \in B\}\) be the ramification profile of \(S\).
  Let \(\Gamma \subset  \PGL(2)\) be the stabiliser of \(S \subset \Sym^d\k^2\), and assume that it is finite.
  Let \(\Orb(s)\) be the closure of the \(\GL(r+1) \times \GL(2)\) orbit of \(s\).
  Then the class
  \[ |\Gamma| \cdot [\Orb(s)] \in A^{(r+1)(d-r)-3}_{\GL(r+1) \times \GL(2)}(\Hom(\k^{r+1}, \Sym^d \k^2)) \]
  is given by
  \[|\Gamma| \cdot [\Orb(s)] = \frac{1}{(\omega_1-\omega_2)^3}\sum_{b \in B}   \psi_{a_0(b), \dots, a_r(b)}(\mu;\omega) + \frac{2-|B|}{(\omega_1-\omega_2)^3} \cdot \psi_{0,\dots,r}(\mu;\omega).\]
\end{theorem}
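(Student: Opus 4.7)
The plan is to express $|\Gamma|\cdot[\Orb(s)]$ as a pushforward from the three-dimensional parameter space $\PGL(2)/\Gamma \to \overline{\Orb(s)}$ (a birational morphism of degree $|\Gamma|$) and compute this pushforward via equivariant torus localization. First, I would reduce to the Grassmannian via the surjection~\eqref{eq:homgrass}: since $\GL(r+1)$ acts freely on injective maps, it suffices to compute the $\GL(2)$-equivariant class $|\Gamma|\cdot[\overline{\PGL(2)\cdot S}]$ in $\Gr(r+1, \Sym^d\k^2)$, with the dependence on the $\mu_j$ arising from Chern roots of the tautological subbundle.

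A Bialynicki--Birula analysis for a maximal torus $T \subset \PGL(2)$ identifies the $T$-fixed points of $\overline{\PGL(2)\cdot S}$: the limits $\lim_{t\to 0} t\cdot g\cdot S$ are the monomial subspaces $P_{\underline{a}(g^{-1}\cdot 0)} = \operatorname{span}(x^{d-a_i}y^{a_i})$, so the fixed points are $P_{\underline{a}(b)}$ for $b \in B$ together with the ``generic'' limit $P_{0,1,\dots,r}$. The polynomial $\phi_{\underline{a}}(\mu;\omega_1,\omega_2)$ is recognizable as the equivariant class of the coordinate subvariety $F_{\underline{a}} = \Hom(\k^{r+1}, P_{\underline{a}}) \subset \Hom(\k^{r+1}, \Sym^d\k^2)$, namely the product of the weights of the complementary normal directions. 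The antisymmetrization $\psi_{\underline{a}} = \phi_{\underline{a}}(\omega_1,\omega_2) - \phi_{\underline{a}}(\omega_2,\omega_1)$ then encodes contributions from the two $T$-fixed points $0, \infty \in \P^1$ (swapping $\omega_1 \leftrightarrow \omega_2$ swaps these), and the denominator $(\omega_1-\omega_2)^3$ corresponds to the three-dimensional $\PGL(2)$-orbit direction at a fixed point of a suitable compactification.

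The stated formula then admits a transparent interpretation as an Euler-characteristic integral over $\P^1$. Letting $\beta(p) = \psi_{\underline{a}(p)}(\mu;\omega)/(\omega_1-\omega_2)^3$ be the locally constant class on $\P^1$ that equals $\beta_{\text{gen}} = \psi_{0,1,\dots,r}/(\omega_1-\omega_2)^3$ for $p \notin B$ and jumps to $\psi_{\underline{a}(b)}/(\omega_1-\omega_2)^3$ at $b \in B$, the formula reads
\[
|\Gamma|\cdot[\Orb(s)] = \sum_{b \in B} \beta(b) + \chi(\P^1 \setminus B)\cdot \beta_{\text{gen}},
\]
where the coefficient $(2-|B|) = \chi(\P^1 \setminus B)$ reflects $\chi(\P^1) = 2$ together with the removal of the $|B|$ ramification points. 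To make this rigorous I would construct a resolution of $\overline{\Orb(s)}$ --- for instance via an incidence variety in $\Gr \times \P^1$ (or $\Gr \times (\P^1)^B$) parametrising pairs $(V, p)$ with $V$ having specified ramification at $p$, or via a suitable compactification of $\PGL(2)$ on which the orbit map $g \mapsto g\cdot S$ extends to a proper morphism --- and compute the pushforward by equivariant localization on its $T$-fixed locus.

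The main obstacle is setting up this resolution carefully so that (i) the local contributions at the $T$-fixed points match the $\psi_{\underline{a}(b)}$ terms on the nose, (ii) the singular behavior of $\overline{\Orb(s)}$ at these fixed points is correctly handled, and (iii) the Euler-characteristic balance producing the $(2-|B|)$ coefficient emerges from the sum over boundary strata rather than by ad hoc cancellation. Verifying the divisibility of $\sum_{b} \psi_{\underline{a}(b)} + (2-|B|)\psi_{0,\dots,r}$ by $(\omega_1-\omega_2)^3$, which the formula implicitly asserts, will itself be a nontrivial consistency check; it must follow from the existence of the equivariant class together with the antisymmetry of $\psi$ in the $\omega$'s.
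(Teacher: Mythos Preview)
Your overall strategy---compactify \(\PGL(2)\), extend the orbit map, and localize with respect to a maximal torus---is exactly what the paper does, and your identification of \(\phi_{\underline a}\) as the class of the coordinate subspace and of \((\omega_1-\omega_2)^3\) as the tangent weights is correct. But two concrete ingredients are missing, and without them the argument does not close.

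First, the paper's compactification is the specific one \(\P^3 = \P\Hom(\k^2,\k^2)\), and the rational orbit map \(m \mapsto m(S)\) to \(\Gr(r+1,\Sym^d\k^2)\) is resolved by blowing up the lines \(L_b = \{m \mid m(b)=0\}\) for \(b \in B\). The nontrivial point, which your proposal does not address, is \emph{why} this single blow-up suffices. The paper proves this by composing with the finite Wronskian map \(\rho \colon \Gr \to \P\Sym^{(r+1)(d-r)}\k^2\): since \(\rho\) is finite, the orbit map to \(\Gr\) extends on \(\widetilde\P^3\) if and only if the induced map to \(\P\Sym^{(r+1)(d-r)}\k^2\) does, and the latter is precisely the \(r=0\) case already handled by Aluffi--Faber. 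This reduction is the technical heart of the proof; a direct Bialynicki--Birula analysis of \(\overline{\Orb(S)}\) itself, or an ad hoc incidence construction, would have to confront the singularities of the orbit closure without this shortcut.

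Second, the \(T\)-fixed locus of \(\widetilde\P^3\) is not a finite set of points: it consists of the isolated points \(p_{i,b}\) (for \(i=1,2\), \(b\in B\)) together with two \(T\)-fixed \emph{lines} \(\widetilde\Lambda_1,\widetilde\Lambda_2\), the proper transforms of the rulings \(\P\Hom(\k^2,\langle w_i\rangle)\). On these lines \(\widetilde\phi\) is constant with value the generic subspace \(\langle w_1^d,\dots,w_1^{d-r}w_2^r\rangle\), and the coefficient \(2-|B|\) arises not as an Euler characteristic of \(\P^1\setminus B\) but from the equivariant normal bundle \([\widetilde N_i] = ([O]+[O(2-|B|)])\otimes\chi(\mp1,\pm1)\): integrating the localization integrand over \(\widetilde\Lambda_i \cong \P^1\) picks out the coefficient of \(h\), which is \((2-|B|)/(\omega_1-\omega_2)^3\). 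Your Euler-characteristic heuristic gives the right number but for the wrong structural reason, and would not by itself tell you how to organize the localization computation.
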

Despite the apparent denominators, the expression must be a polynomial (although this is not obvious).

To get the \(\GL(2)\)-equivariant class of \(\Orb(S) \subset \Gr(r+1, \Sym^d\k^2)\), we apply the homomorphism \eqref{eq:homgrass}, which sends the \(\mu\)'s to the Chern roots of the universal sub-bundle.
To get the non-equivariant class of \(\Orb(S) \subset \Gr(r+1, \Sym^d\k^2)\), we further substitute \(\omega_1 = \omega_2 = 0\).

\begin{remark}
  Although \Cref{thm:main} gives the non-equivariant classes by the series of substitutions mentioned above, it does not give a simple expression for it.
  Indeed, the substitution \(\omega_1 = \omega_2 = 0\) can only be performed after dividing by the denominator in \Cref{thm:main}, and the result of the division is not obvious.
\end{remark}
\begin{remark}
  Given \([S] \in \Gr(r+1, \Sym^d W)\), we can consider \([S]\) as a point of the projective space \(\P \left(\wedge^{r+1} \Sym^d W\right)\) by applying the Plucker embedding.
  The class of \(\Orb([S])\) in \(\P \left(\wedge^{r+1} \Sym^d W\right)\) depends only on the multiplicities in the ramification divisor of \(S\).
  The class of \(\Orb([S])\) in \(\Gr(r+1, \Sym^d W)\) is a richer invariant, sensitive to the full ramification profile.
\end{remark}

\subsection{Orbit closures}
The method of proof of the main theorem also yields a description of the boundary points of the orbit closure.
\begin{theorem}[\Cref{thm:orbitclosure}]\label{thm:closure}
  The orbit closure \(\Orb(S) \subset \Gr(r+1, \Sym^d\k^2)\) is a union of finitely many orbits.
  In addition to the orbit of \(S\), these are the orbits of subspaces of the form
  \[\langle w_1^{d-a_0}w_2^{a_0}, \dots, w_1^{d-a_r}w_2^{a_r} \rangle,\]
  where \(a_0 < \dots < a_r\) is the vanishing sequence of \(S\) at some point of \(\P^1\) and \(w_1, w_2\) is a basis of \(\k^2\).
\end{theorem}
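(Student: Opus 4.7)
The plan is to combine an explicit one-parameter degeneration with an analysis of torus-fixed points in the Grassmannian.

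For the inclusion ``$\supseteq$'', fix $p \in \P^1$ and choose coordinates $w_1, w_2$ on $\k^2$ with $w_2$ vanishing at $p$. Let $a_0 < \dots < a_r$ be the vanishing sequence of $S$ at $p$, and pick a basis $f_0, \dots, f_r$ of $S$ with $f_i = c_i w_1^{d-a_i}w_2^{a_i} + (\text{higher powers of } w_2)$ and $c_i \neq 0$. Acting on $s$ by the one-parameter subgroup $(\operatorname{diag}(t^{a_0}, \dots, t^{a_r}),\ \operatorname{diag}(1, t)) \in \GL(r+1) \times \GL(2)$ sends the image of the $i$-th basis vector to $c_i w_1^{d-a_i}w_2^{a_i} + O(t)$, whose $t \to 0$ limit is the inclusion of $M_{a_0, \dots, a_r} := \langle w_1^{d-a_0}w_2^{a_0}, \dots, w_1^{d-a_r}w_2^{a_r}\rangle$. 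Hence $M_{a_0, \dots, a_r}$ lies in $\Orb(S)$.

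For ``$\subseteq$'', I first show every $\GL(2)$-orbit in $\Orb(S) \setminus \GL(2)\cdot [S]$ contains a $T$-fixed point, where $T \subset \GL(2)$ is the maximal torus; the $T$-fixed points in $\Gr$ are exactly the monomial subspaces $M_{b_0, \dots, b_r}$. A boundary orbit has positive-dimensional stabiliser, and a case-check of the positive-dim connected subgroups of $\GL(2)$ shows that the only stabiliser avoiding all conjugates of $T$ would consist only of unipotent elements; but the unique $(r+1)$-dim unipotent-invariant subspace of $\Sym^d\k^2$ is $w_1^{d-r}\cdot \Sym^r\k^2 = M_{0, 1, \dots, r}$, which is also $T$-invariant, so this case does not occur.

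To identify $\Orb(S)^T$ I use Bialynicki--Birula limits. For each 1-PS $\nu$ of $T$ and each $g \in \GL(2)$, the same weight computation as in the forward direction shows $\lim_{t\to 0}\nu(t)\cdot g\cdot [S] = M_{a_0(q), \dots, a_r(q)}$, where $q = g^{-1}\cdot p_\nu$ is the $g$-pullback of the attracting fixed point of $\nu$. As $g$ and $\nu$ vary, $q$ ranges over all of $\P^1$, so every monomial subspace indexed by a vanishing sequence of $S$ appears in $\Orb(S)^T$. Conversely, any $T$-fixed point $F \in \Orb(S)$ admits a 1-PS $\nu$ whose attracting Bialynicki--Birula cell at $F$ is top-dimensional in $\Gr$, hence meets the dense open orbit $\GL(2) \cdot [S]$, so $F$ is realised as a Bialynicki--Birula limit of a point of the open orbit. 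Thus $\Orb(S)^T = \{M_{a_0(q), \dots, a_r(q)} : q \in \P^1\}$, and every boundary orbit is the $\GL(2)$-orbit of one of these, collapsing to one orbit per distinct vanishing sequence. The main technical step is the converse direction — that every $T$-fixed point of the orbit closure is attained as a Bialynicki--Birula limit from the open orbit — which is handled by the variational choice of 1-PS.
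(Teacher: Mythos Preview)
Your forward inclusion and the reduction of boundary orbits to $T$-fixed points are both fine; the degeneration you write down is exactly the computation in \Cref{prop:key}, and the case analysis of positive-dimensional stabilisers in $\PGL_2$ is correct (the unique $U$-invariant $(r+1)$-plane is indeed $M_{0,\dots,r}$, which already has Borel stabiliser, so no orbit has purely unipotent stabiliser).

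The gap is in the converse identification of $\Orb(S)^T$. You assert that any $T$-fixed $F\in\Orb(S)$ admits a 1-PS $\nu$ whose Bia{\l}ynicki--Birula cell at $F$ is top-dimensional in $\Gr$. If $\nu$ is a 1-PS of the small torus $T\subset\GL(2)$, the weights on $w_1^{d-i}w_2^i$ are linear in $i$, so the BB decomposition of $\Gr$ is the Schubert decomposition and the \emph{only} fixed points with open cell are $M_{0,\dots,r}$ and $M_{d-r,\dots,d}$; your claim is false for every other $F$. If instead $\nu$ lies in the big torus of $\GL(d+1)$, you can place the open cell at any $F$ and the open orbit does meet it, but then $\lim_{t\to 0}\nu(t)\cdot g[S]$ is no longer given by the vanishing-sequence formula (that computation was specific to 1-PS of $T$), and moreover $\Orb(S)$ is not $\nu$-stable, so you learn nothing beyond the tautology $F=F$. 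In neither reading does the argument exclude a spurious monomial subspace $M_{c_0,\dots,c_r}\in\Orb(S)$ with $(c_i)$ not a vanishing sequence of $S$: for $\nu$ in $T$ its BB cell could lie entirely inside the $2$-dimensional boundary. What is actually required is control over \emph{all} limits $\lim g_\eta[S]$ for $g_\eta\in\GL(2)$ over a DVR, not just limits along a constant 1-PS. The paper obtains this by resolving the rational map $\P\Hom(\k^2,\k^2)\dashrightarrow\Gr$ to a regular morphism $\widetilde\phi\colon\widetilde{\P}^3\to\Gr$ (\Cref{thm:res}); since $\widetilde{\P}^3$ is proper and $\widetilde\phi$ is $\GL(2)$-equivariant, its image is exactly $\Orb(S)$, and the boundary orbits are then read off from the explicit values of $\widetilde\phi$ on the exceptional locus via \Cref{prop:key} and \Cref{cor:gen}.
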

At a generic point of \(\P^1\), the vanishing sequence is given by \(a_i = i\).
The orbit of the corresponding linear series \(\langle  w_1^{d}w_2^0, \dots, w_1^{d-r}w_2^r \rangle\) is \(1\)-dimensional.
At a ramification point, the vanishing sequence is different, and the orbit of the corresponding linear series \(\langle  w_1^{d-a_0}w_2^{a_0}, \dots, w_1^{d-a_r}w_2^{a_r} \rangle\) is \(2\)-dimensional.

\subsection{Orbit specialisation}
Consider a one-parameter family of linear series in which some ramification points collide.
Then the flat limit of the orbit closure contains the orbit closure of the limiting linear series.
In a suitable family, the limit of the orbit closure contains the orbit closure of a unique other linear series, which we now describe.
Let \(A\) be the multi-set of vanishing sequences of the points that collide and \(B\) the multi-set of  vanishing sequences of the remaining points.
Let \(c = (c_0, \dots, c_r)\) be the vanishing sequence of the limiting linear series at the point of collision.
Set \(c' = (d-c_r, \dots, d-c_0)\).
Denote by \(\WOrb\) the weighted orbit closure, namely
\[ \WOrb = |\Gamma| \cdot \Orb.\]
Recall that we can treat the class \([\WOrb]\) as a function of the ramification profile.
\begin{theorem}[\Cref{prop:introspec}]\label{thm:introspec}
  In the notation above,  we have the equality of \(\GL(r+1) \times \GL(2)\)-equivariant classes
  \[ [\WOrb(A \cup B)] = [\WOrb(A \cup \{c'\})] + [\WOrb(B \cup \{c\})].\]
\end{theorem}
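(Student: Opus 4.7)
My plan is to derive \Cref{thm:introspec} as a direct algebraic consequence of \Cref{thm:main}. The key observation is that the formula for \([\WOrb]\) given by the main theorem is essentially a linear functional of the ramification profile, so the three classes that appear in the identity can be subtracted term by term.

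\textbf{Step 1 (linearity and cancellation).} For any profile \(P\), \Cref{thm:main} gives
\[[\WOrb(P)] = \frac{1}{(\omega_1 - \omega_2)^3}\left[\sum_{a \in P} \psi_a(\mu;\omega) + (2 - |P|)\, \psi_{0,\ldots,r}(\mu;\omega)\right].\]
Substituting \(P = A \cup B\), \(P = A \cup \{c'\}\), and \(P = B \cup \{c\}\) and forming the alternating sum, the coefficients of \(\psi_{0,\ldots,r}\) assemble as \((2 - |A| - |B|) - (2 - |A| - 1) - (2 - |B| - 1) = 0\), so the generic term drops out. The contributions \(\psi_a\) for \(a \in A \cup B\) also pair up and cancel. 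What remains is
\[[\WOrb(A \cup B)] - [\WOrb(A \cup \{c'\})] - [\WOrb(B \cup \{c\})] = -\frac{\psi_c(\mu;\omega) + \psi_{c'}(\mu;\omega)}{(\omega_1 - \omega_2)^3},\]
so the theorem reduces to the polynomial identity \(\psi_c + \psi_{c'} = 0\).

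\textbf{Step 2 (the identity \(\psi_{c'} = -\psi_c\)).} Writing \(I_c = \{0,\ldots,d\} \setminus \{c_0,\ldots,c_r\}\) and similarly for \(I_{c'}\), the involution \(i \mapsto d - i\) is a bijection \(I_{c'} \to I_c\), by definition of \(c' = (d - c_r, \ldots, d - c_0)\). Unpacking the definition of \(\phi\),
\[\phi_{c'}(\mu;\omega_1, \omega_2) = \prod_{j=0}^{r}\prod_{i \in I_{c'}}\bigl((d-i)\omega_1 + i\omega_2 - \mu_j\bigr) = \prod_{j=0}^{r}\prod_{k \in I_c}\bigl(k\omega_1 + (d-k)\omega_2 - \mu_j\bigr) = \phi_c(\mu;\omega_2, \omega_1).\]
Since \(\psi_a\) is the anti-symmetrisation \(\phi_a(\omega_1, \omega_2) - \phi_a(\omega_2, \omega_1)\), the relation \(\phi_{c'}(\omega_1, \omega_2) = \phi_c(\omega_2, \omega_1)\) immediately yields \(\psi_{c'} = -\psi_c\), completing the proof.

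\textbf{Where the difficulty lies.} Granted \Cref{thm:main}, this is essentially bookkeeping; the one delicate point is the sign in Step~2, i.e.\ checking that \(\phi_{c'}(\omega_1,\omega_2)\) equals \(\phi_c(\omega_2,\omega_1)\) and not \(\phi_c(\omega_1,\omega_2)\), which in turn explains why the theorem reads \(c' = (d - c_r, \ldots, d - c_0)\) rather than \(c\) itself. The underlying geometric statement --- that the flat limit of \(\Orb(S_t)\) in a family collapsing the \(A\)-type ramification decomposes set-theoretically as \(\Orb(A \cup \{c'\}) \cup \Orb(B \cup \{c\})\), with multiplicities corresponding exactly to the stabiliser weights --- is subtler. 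If one were to pursue a proof independent of \Cref{thm:main}, the main obstacle would be identifying the ``rescaled'' component \(\Orb(A \cup \{c'\})\) via a zoom-in family \(g_t \cdot S_t\) on \(\P^1\) and controlling the length of each component of the flat limit; relying on the main theorem sidesteps both issues.
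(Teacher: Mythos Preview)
Your argument is correct, and it takes a genuinely different route from the paper's own proof.

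\textbf{Comparison.} The paper proves \Cref{thm:introspec} (in its restated form, \Cref{prop:introspec}) geometrically: it constructs a one-parameter degeneration whose labelled dual graph has two vertices, invokes the smoothing theorem for limit linear series (\Cref{prop:existence}), and then applies the cycle-level specialisation result \Cref{thm:spec}, which in turn rests on \Cref{thm:degeneration} and the Aluffi--Faber pre-degree formula. This yields an equality of \(\GL(r+1)\times\GL(2)\)-invariant \emph{cycles}, not merely of classes, and simultaneously establishes the existence of a linear series with profile \(A \cup \{c'\}\). Your approach bypasses all of this: you plug the three profiles into the explicit formula of \Cref{thm:main}, observe that the generic terms and the \(A\cup B\) terms cancel, and reduce everything to the clean identity \(\psi_{c'} = -\psi_c\), which you verify directly from the definition via the involution \(i \mapsto d-i\).

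\textbf{What each approach buys.} Your route is much shorter and makes transparent why the complementary sequence \(c'\) is exactly what must appear. The paper's route is logically independent of \Cref{thm:main}, delivers the stronger cycle-level statement, and proves along the way that a linear series with ramification profile \(A \cup \{c'\}\) actually exists --- a fact your argument tacitly uses in order for \([\WOrb(A \cup \{c'\})]\) to be defined and for \Cref{thm:main} to apply to it. You should acknowledge this dependence explicitly (or note, as the paper does in \Cref{rem:existence}, that existence follows from Schubert calculus). A second minor point: \Cref{thm:main} assumes a finite stabiliser, so strictly speaking you should observe that each of the three profiles has at least three entries (since \(|A|\ge 2\) and \(|B|\ge 1\) in any nontrivial collision, with the degenerate two-point case handled because \(\psi_a + \psi_{a'} = 0\) gives \([\WOrb]=0\) on the nose).
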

The statement above is a special case of a large family of linear relations among the classes \([\WOrb]\); see \Cref{sec:spec} for more.

By repeated collisions of ramification points, we can express an arbitrary orbit class as a non-negative linear combination of orbit classes of linear series with 3 ramification points (the expression is not unique).
For example, in the case of quartic pencils, \Cref{fig:quarticspec} shows a sequence of collisions and the resulting complementary orbits.
As a result, we get the following equality
\[
  \begin{split}
    \WOrb((0,2) \times 6) &= \\
    \WOrb((0,3)&\times 2, (1,2)) + 2 \WOrb((0,2) \times 2, (1,4)) + \WOrb((0,2) \times 2, (2,3)).
  \end{split}
\]
In the non-equivariant Chow ring of the Grassmannian, the above evaluates to the identity
\[
  24 \cdot \Omega_3 + 48 \cdot \Omega_{2,1} =
    \left(8 \Omega_3 + 20 \cdot \Omega_{2,1}\right) + 2 \cdot \left(8 \cdot \Omega_{3} + 8 \cdot \Omega_{2,1} \right) + 12 \cdot \Omega_{2,1}.
\]

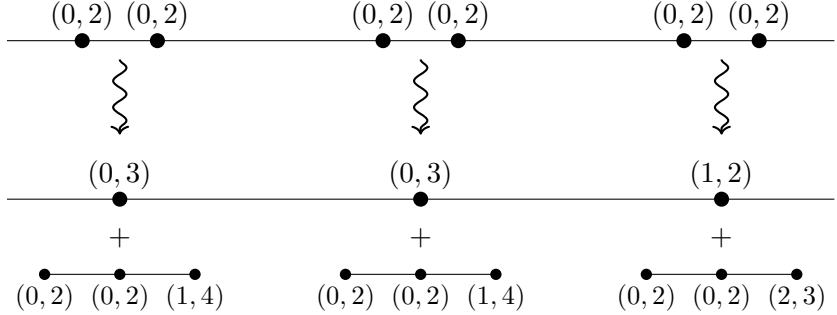
\begin{figure}
\begin{tikzpicture}
  \draw (-6,0) -- (5,0);
  \fill
  (-5,0) circle (.1) node[above] {\((0,2)\)}
  (-4,0) circle (0.1) node[above] {\((0,2)\)}
  (-1,0) circle (0.1) node[above] {\((0,2)\)}
  (0,0) circle (0.1) node[above] {\((0,2)\)}
  (3,0) circle (0.1) node[above] {\((0,2)\)}
  (4,0) circle (0.1) node[above] {\((0,2)\)};

  \begin{scope}[yshift = -60]
  \draw (-6,0) -- (5,0);
  \fill
  (-4.5,0) circle (.1) node[above] {\((0,3)\)} node[below=0.25cm] {\(+\)}
  (-0.5,0) circle (0.1) node[above] {\((0,3)\)} node[below=0.25cm] {\(+\)}
  (3.5,0) circle (0.1) node[above] {\((1,2)\)} node[below=0.25cm] {\(+\)};

  \draw(-5.5,-1) -- (-3.5,-1);
  \draw(-1.5,-1) -- (0.5,-1);
  \draw(2.5,-1) -- (4.5,-1);
  \fill (-5.5,-1) circle (0.075) node[below] {\small \((0,2)\)};
  \fill (-4.5,-1) circle (0.075) node[below] {\small \((0,2)\)};
  \fill (-3.5,-1) circle (0.075) node[below] {\small \((1,4)\)};

  \fill (-1.5,-1) circle (0.075) node[below] {\small \((0,2)\)};
  \fill (-0.5,-1) circle (0.075) node[below] {\small \((0,2)\)};
  \fill (0.5,-1) circle (0.075) node[below] {\small \((1,4)\)};

  \fill (2.5,-1) circle (0.075) node[below] {\small \((0,2)\)};
  \fill (3.5,-1) circle (0.075) node[below] {\small \((0,2)\)};
  \fill (4.5,-1) circle (0.075) node[below] {\small \((2,3)\)};
\end{scope}

\draw[thick, decoration=snake, ->, decorate] (-4.5,-0.25) to (-4.5, -1.25);
\draw[thick, decoration=snake, ->, decorate] (-0.5,-0.25) to (-0.5, -1.25);
\draw[thick, decoration=snake, ->, decorate] (3.5,-0.25) to (3.5, -1.25);
\end{tikzpicture}
\caption{An example of orbit specialisation for a generic quartic pencil. Collisions of ramification points create a new ramification profile and a unique complementary orbit shown underneath.}
\label{fig:quarticspec}
\end{figure}

\subsection{Ideas in the proof}
To find the class of the closure of the \(G\)-orbit of \(m \in M\), we use a complete orbit parametrisation for \(m\).
A complete orbit parametrisation consists of a proper \(G\)-variety \(X\) with a dense orbit and an equivariant map \(\phi \colon X \to M\) whose image contains \(m\).
Then, up to a scalar, the class we seek is \(\phi_{*}[X]\).

We begin with \(\P^3 = \P\Hom(\k^{2}, \k^2)\) with the natural \(\GL(2)\) action on the target \(\k^2\) and the equivariant rational map
\[\phi \colon \P^3 \dashrightarrow \Gr(r+1, \Sym^d\k^2)\]
defined by \(m \mapsto m(S)\).
The map is only rational because when \(m\) is degenerate, the image \(m(S) \subset \Sym^d\k^2\) need not be \((r+1)\)-dimensional.
We prove (\Cref{thm:res}) that \(\phi\) extends to a regular map on a simple blow up:
\begin{equation}\label{eq:res}
  \widetilde{\P}^3 \to \Gr(r+1, \Sym^d\k^2).
\end{equation}
This is the same blow-up that Aluffi and Faber use in \cite{alu.fab:93*1} for \(r = 0\).
We compute the push-forward using equivariant localisation as in \cite[Appendix~B]{lee.pat.tse:19}.
We upgrade from the orbit class in \(\Gr(r+1,\Sym^d\k^2)\) to the orbit class in \(\Hom(\k^{r+1}, \Sym^d\k^2)\) using general principles, outlined in \Cref{sec:grass}.

\subsection{Organisation}
In \Cref{sec:grass}, we discuss equivariant orbit classes in general and classes on Grassmannians in particular.
We also discuss general results about one parameter specialisations of orbits.
In \Cref{sec:van}, we recall vanishing sequences, the Wronskian, and the ramification divisor of a linear series.
In \Cref{sec:res}, we begin the proof of the main theorem by constructing a complete parametrisation for a linear series.
In \Cref{sec:localisation}, we finish the proof of the main theorem with a localisation computation.
In \Cref{sec:spec}, we discuss one-parameter specialisations of orbits of linear series, and the resulting additive decomposition of the orbit class.

\subsection{Conventions}
Throughout, \(\k\) denotes an algebraically closed field of characteristic 0 or of characteristic \(> d\).
All schemes are of finite type over \(\k\).
A \emph{variety} is a reduced scheme, separated over \(\k\).
Given a vector space \(V\), the notation \(\underline V\) denotes the trivial vector bundle with fiber \(V\) on the scheme that is clear from the context.
Given a vector bundle \(V\), the \emph{projectivisation} \(\P V\) is the space of one-dimensional sub-bundles.
Unless mentioned otherwise, \emph{point} means a closed point.
Given a variety \(X\) with a \(G\)-action, \(A^i_G(X)\) denotes the \(G\)-equivariant Chow group of \(X\) of cycles of co-dimension \(i\), as defined in \cite{edi.gra:98}.

\subsection{Acknowledgements}
We started working on the project when Anand P. visited Anand D. at the Institute for Computational and Experimental Research in Mathematics (ICERM) at Brown University during the semester program \emph{Braids} in 2022.
We thank ICERM and the organisers of \emph{Braids}.
Anand D. thanks the Australian Research Council for the grant \texttt{DE180101360} that supported a part of this project.

\section{Equivariant orbit classes}\label{sec:grass}
Let \(G\) be an algebraic group and \(M\) a variety with a \(G\) action.
A \emph{complete parametrisation} of the \(G\) orbit of \(m \in M\) is a \(G\)-variety \(X\) together with a proper \(G\)-equivariant map \(\phi \colon X \to M\) such that
\begin{enumerate}
\item there exists an \(x \in X\) such that \(\phi (x) = m\),
\item the  \(G\)-orbit of \(x\) is dense in \(X\), and
\item the stabiliser of \(x \in X\) is trivial.
\end{enumerate}

Let \(\Gamma \subset G\) be the stabiliser of \(m \in M\) and let \(\phi \colon X \to M\) be a complete parametrisation of the \(G\) orbit of \(m\).
Then we have
\begin{equation}\label{eq:worb}
  \phi_*[X] = \begin{cases}
    |\Gamma| \Orb(m) & \text{ if \(\Gamma\) is finite} \\
    0 & \text{otherwise}.
    \end{cases}
\end{equation}

We now specialise to two particular settings.

In the first setting, we take \(M = W\), where \(W\) is a non-zero \(G\)-representation and the closely related setting of \(M = \P W\).
Assume that the image of \(G \to \GL(W)\) contains the scalar matrices.
Suppose the map \(G \to \PGL(W)\) factors through \(G \to \overline G\).
(In applications, \(G\) will be a general linear group and \(\overline G\) will be the corresponding projective general linear group.)

Let
\[ 0 \to O(-1) \to \underline W \to Q \to 0\]
be the universal sequence on \(\P W\).
\begin{proposition}\label{prop:intzeta}
  Let \(w \in W\) be non-zero.
  Let \(\Gamma \subset \overline G\) be the stabiliser of \([w] \in \P W\) and assume that it is finite.
  Let \(\phi \colon X \to \P W\) be a complete parametrisation of the \(\overline G\) orbit of \([w] \in \P W\).
  Then, in \(A^*_G(W) = A^*_G(\bullet)\), we have
  \[ |\Gamma| [\Orb(w)] = \int_X c_{\rm top}(\phi^* Q).\]
\end{proposition}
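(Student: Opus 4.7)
The plan is to realise both sides of the claimed identity as the pushforward of a single class along a natural $G$-equivariant map constructed from $\phi$.

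First, I set up the relevant line bundle. Pulling back the universal sequence $0 \to O(-1) \to \underline W \to Q \to 0$ along $\phi$ produces an inclusion $L := \phi^* O(-1) \hookrightarrow \underline W_X$ of $G$-equivariant vector bundles on $X$. Viewing the total space $L$ as a closed subvariety of $X \times W$, the second projection restricts to a proper $G$-equivariant map $p \colon L \to W$. Since the image of $G \to \GL(W)$ contains the scalars, the image of $p$ is the cone in $W$ over $\phi(X) \subset \P W$, which is exactly $\Orb(w)$.

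Next, I compute $p_*[L]$ by a degree argument. A generic $v \in \Orb(w)$ lies on a unique line through the origin, so preimages of $v$ in $L$ correspond bijectively to preimages of $[v]$ under $\phi$. The complete parametrisation hypotheses imply that $\phi$ restricts to a $|\Gamma|$-to-one cover from the dense orbit of $X$ onto the dense orbit $\overline G \cdot [w] \subset \P W$: the fibre over $[w]$ is a free $\Gamma$-orbit inside $X$ because the $\overline G$-action on the dense orbit has trivial stabilisers. Hence $p$ is generically $|\Gamma|$-to-one onto $\Orb(w)$, giving $p_*[L] = |\Gamma| \cdot [\Orb(w)]$ in $A^*_G(W) = A^*_G(\bullet)$.

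For the other computation of $p_*[L]$, I realise $L$ as the zero scheme of a regular section. On $X \times W$, the evaluation section $(x, v) \mapsto v$ of $\underline W_{X \times W}$, composed with the surjection $\underline W_{X \times W} \to \pi_X^* \phi^* Q$, gives a section of $\pi_X^* \phi^* Q$ whose zero scheme is precisely $L$. Since $L$ has the expected codimension $\dim W - 1 = \operatorname{rk}(\phi^* Q)$, the section is regular, so $[L] = c_{\mathrm{top}}(\pi_X^* \phi^* Q)$ in $A^*_G(X \times W)$. The projection $q_W \colon X \times W \to W$ becomes, under the canonical isomorphisms $A^*_G(X \times W) \cong A^*_G(X)$ and $A^*_G(W) \cong A^*_G(\bullet)$ coming from flat pullback along the vector-bundle projections, the proper pushforward $\int_X \colon A^*_G(X) \to A^*_G(\bullet)$. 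Since $p$ is the restriction of $q_W$ to $L \subset X \times W$, we conclude $p_*[L] = \int_X c_{\mathrm{top}}(\phi^* Q)$. Comparing with the previous paragraph finishes the proof.

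The main obstacle I anticipate is the degree argument: one must use the hypothesis that scalars act faithfully on $W$ (so that $p$ has the expected image dimension and is not degree zero) together with a careful unpacking of the complete-parametrisation conditions to confirm that $\phi$ really is a $|\Gamma|$-to-one cover over the dense $\overline G$-orbit in $\P W$. The rest is formal manipulation with the universal sequence and flat pullbacks.
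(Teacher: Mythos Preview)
Your proof is correct. Both your argument and the paper's hinge on the same geometric idea---cutting out the tautological line inside $\underline W$ using a section of the quotient $Q$---but you package it differently. The paper first invokes the general identity $\phi_*[X] = |\Gamma|\,[\Orb([w])]$ in $\P W$, then realises $\phi_*[X]$ as ${\pi_2}_* c_{\mathrm{top}}\Hom(\pi_2^*O(-1),\pi_1^*\phi^*Q)$ on $X \times \P W$, pulls this back to $X \times W^*$ (where $O(-1)$ trivialises), and finally extends from $A^*_G(W^*)$ to $A^*_G(W)$ by a codimension argument. You instead work directly on $X \times W$: the total space of $L = \phi^*O(-1)$ sits inside $X \times W$ as the zero locus of the tautological section of $\pi_X^*\phi^*Q$, and pushing $[L]$ forward to $W$ is computed once geometrically (as $|\Gamma|\,[\Orb(w)]$, via the degree count) and once cohomologically (as $\int_X c_{\mathrm{top}}(\phi^*Q)$, via flat base change on the square $X \times W \to W$ over $X \to \bullet$). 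Your route is a bit more direct---it avoids the detour through $W^*$ and the separate extension step---while the paper's route isolates the projective statement $\phi_*[X] = |\Gamma|\,[\Orb([w])]$ as a standalone fact before lifting.
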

\begin{proof}
  Let \(\pi \colon W^* \to \P W\) be the projection.
  Since the image of \(G\) contains the scalars, we have
  \[ \Orb(w) = \pi^* \Orb([w]),\]
  and hence
  \begin{equation}\label{eq:push2}
    |\Gamma| \cdot \Orb(w) = |\Gamma| \cdot \pi^* \Orb([w]).
  \end{equation}
  Since \(\phi\) is a complete parametrisation of the \(\overline G\) orbit of \([w]\), we have
  \[ |\Gamma| \cdot \Orb([w]) = \phi_*[X].\]
  We now compute the right hand side.
  
  On \(X \times \P W\), let \(\pi_i\) for \(i = 1,2\) be the two projections.
  Consider the composite
  \[ \pi_2^*O(-1) \to \underline W \to \pi_1^*\phi^*Q.\]
  Its vanishing locus is precisely the graph of \(\phi \colon X \to \P W\).
  Therefore, we have
  \begin{equation}\label{eq:push1}
    \phi_*[X] = {\pi_2}_* c_{\rm top} \Hom(\pi_2^*O(-1), \pi_1^* \phi^*Q).
  \end{equation}

  In \eqref{eq:push2}, we substitute \(\phi_*[X]\) from \eqref{eq:push1} and use the push-pull formula on the fiber square
  \[
    \begin{tikzcd}
      X \times W^* \ar{r}\ar{d}& W^* \ar{d}{\pi}\\
      X \times \P W \ar{r}& \P W.
    \end{tikzcd}
  \]
  Let \(\widetilde \pi_i\) for \(i = 1,2\) be the two projections on \(X \times W^*\).
  Noting that the pull-back of \(O(-1)\) to \(W^*\) is trivial, in \(A^*_G(W^*)\) we have
  \begin{equation}\label{eq:push3}
    \begin{split}
      |\Gamma|[\Orb(w)] &= {\widetilde {\pi_2}}_* c_{\rm top} \Hom(O, \widetilde \pi_1^* \phi^*Q)\\
      &= \int_X c_{\rm top}(\phi^*Q).
    \end{split}
  \end{equation}
  Since \(A^*_G(W^*) = A^*_G(W)\) in co-dimension less than \(\dim W\), equation \eqref{eq:push3} also holds in \(A^*_G(W)\).
\end{proof}

In the next setting, we assume that \(G = \GL(V) \times H\) and \(M = \Hom(V, W)\), where \(W\) is an \(H\)-representation.
Let \(\dim V = r+1\).
A closely related setting is that of \(M = \Gr(r+1, W)\) with the induced action of \(H\).
Suppose the map \(H \to \PGL(W)\) factors through \(H \to \overline H\).
(In applications, \(H\) will be a general linear group and \(\overline H\) will be the corresponding projective general linear group.)

Let
\[ 0 \to E \to \underline W \to Q \to 0\]
be the universal sequence on the Grassmannian.
\begin{proposition}\label{prop:intgr}
  Let \(s \in \Hom(V, W)\) be an injective homomorphism with image \(S\).
  Let \(\Gamma \subset \overline H\) be the stabiliser of \([S] \in \Gr(r+1, W)\) and assume that it is finite.
  Let \(\phi \colon X \to \Gr(r+1, W)\) be a complete parametrisation of the \(\overline H\)-orbit of \([S]\).
  Then, in \(A^*_G(\Hom(V,W)) = A^*_G(\bullet)\), we have
  \[ |\Gamma|[\Orb(s)] = \int_X c_{\rm top} \Hom(\underline V, \phi^*Q).\]
\end{proposition}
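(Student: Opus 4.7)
The plan is to mirror the proof of Proposition \ref{prop:intzeta}, replacing the $\Gm$-torsor $W^{*} \to \P W$ by the $\GL(V)$-torsor $\pi \colon U \to \Gr(r+1,W)$ defined on the open subset $U = \Hom(V,W)^{\rm inj}$ of injective homomorphisms by $s \mapsto \im(s)$. Since $\GL(V)$ acts freely and transitively on the fibers of $\pi$, the $G$-orbit of $s$ in $U$ is exactly the $\pi$-preimage of the $\overline H$-orbit of $[S]$, and hence, after taking closures and multiplying by $|\Gamma|$,
\[ |\Gamma| \cdot [\Orb(s)]|_{U} = \pi^{*}\bigl( |\Gamma| \cdot [\Orb([S])] \bigr) = \pi^{*} \phi_{*}[X] \text{ in } A^{*}_{G}(U),\]
using the complete parametrisation identity $|\Gamma| \cdot [\Orb([S])] = \phi_{*}[X]$ from \eqref{eq:worb}.

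To compute $\pi^{*}\phi_{*}[X]$, I will first realise $\phi_{*}[X]$ via the graph construction on $X \times \Gr(r+1,W)$: the graph of $\phi$ is the vanishing locus of the composition $p_{2}^{*}E \to \underline W \to p_{1}^{*}\phi^{*}Q$, which is a section of a bundle of the correct rank, so $\phi_{*}[X] = (p_{2})_{*} c_{\rm top}\Hom(p_{2}^{*}E, p_{1}^{*}\phi^{*}Q)$. I will then pull back along $\pi$ by base change on the fiber square
\[
\begin{tikzcd}
X \times U \ar[r] \ar[d] & X \times \Gr(r+1,W) \ar[d] \\
U \ar[r, "\pi"] & \Gr(r+1,W)
\end{tikzcd}
\]
together with the key identification $\pi^{*}E \cong \underline V$ on $U$, where the tautological map sending $(s,v)$ to $s(v)$ is a $G$-equivariant isomorphism because the $\GL(V)$-actions on $V$ and on the source $s$ cancel. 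The resulting class $c_{\rm top}\Hom(\underline V, \phi^{*}Q)$ on $X \times U$ is pulled back from $X$, so the push-forward to $U$ collapses to integration along the fiber, yielding
\[|\Gamma| \cdot [\Orb(s)]|_{U} = \int_{X} c_{\rm top}\Hom(\underline V, \phi^{*}Q) \cdot 1_{U}.\]

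Finally, I will lift this identity from $A^{*}_{G}(U)$ back to $A^{*}_{G}(\Hom(V,W)) = A^{*}_{G}(\bullet)$ exactly as in the last line of Proposition \ref{prop:intzeta}'s proof: the restriction map is an isomorphism in codimensions below that of the rank-drop complement $\Hom(V,W) \setminus U$. The conceptual story is a direct adaptation of Proposition \ref{prop:intzeta}; the step requiring the most care will be the verification of the $G$-equivariance of the tautological isomorphism $\pi^{*}E \cong \underline V$, which is what allows the $\GL(V)$-Chern roots on the ambient side to be identified with the Chern roots of the universal subbundle $E$ on the Grassmannian side.
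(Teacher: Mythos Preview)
Your approach is natural and the first several steps are fine: the identification $\pi^{*}E \cong \underline V$ is indeed $G$-equivariant, the graph construction gives $\phi_{*}[X]$ correctly, and the base-change and push-pull manipulations are valid.  The problem is the last step.

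The restriction $A^{*}_{G}(\Hom(V,W)) \to A^{*}_{G}(U)$ is an isomorphism only in codimension strictly less than $\operatorname{codim}(\Hom(V,W)\setminus U) = \dim W - r$.  But the class $[\Orb(s)]$ lives in codimension $(r+1)(\dim W - r - 1) - \dim X$, which is typically much larger.  In the paper's main application ($W = \Sym^{d}\k^{2}$, $H = \GL(2)$), the orbit has codimension $(r+1)(d-r)-3$ while the rank-drop locus has codimension $d-r+1$; the former exceeds the latter as soon as $r(d-r) \geq 4$, e.g.\ already for pencils of quintics.  In fact the restriction map is \emph{never} injective in high degrees: its kernel is exactly the ideal of Grassmannian relations $c(E)c(Q) = c(\underline W)$, which is non-trivial from degree $\dim W - r$ onward.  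So the identity you establish on $U$ does not automatically lift.

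The paper circumvents this by passing instead to $\P\Hom(V,W)$.  It sets $Y = \P\Hom(\underline V, \phi^{*}E)$ over $X$; the tautological line $O_{Y}(-1) \hookrightarrow \Hom(\underline V, \underline W)$ gives a $G$-equivariant map $Y \to \P\Hom(V,W)$ that is a complete parametrisation of the $\PGL(V)\times\overline H$-orbit of $[s]$.  One then applies \Cref{prop:intzeta} directly, and the only open restriction needed is from $\Hom(V,W)$ to $\Hom(V,W)\setminus\{0\}$, whose complement has codimension $(r+1)\dim W$---always large enough.  The remaining integral over $Y$ is reduced to an integral over $X$ by splitting the top Chern class along the two-step filtration $O_{Y}(-1) \subset \Hom(\underline V, \psi^{*}E) \subset \Hom(\underline V, \underline W)$ and using $\int_{Y/X} c_{\rm top}(\coker i) = [X]$.
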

\begin{proof}
  Consider the \(H\)-equivariant bundle \(\phi^* E\) on \(X\).
  Set
  \[F = \Hom(\underline V, \phi^*E).\]
  Then \(F\) admits an action of \(G = \GL V \times H\).
  Set \(Y = \P F\); let \(\pi \colon Y \to X\) be the projection; and let \(\psi = \pi \circ \phi\).
  The composite
  \begin{equation}\label{eq:twosteps}
    O_Y(-1) \xrightarrow{i} \Hom(\underline V , \psi^*E) \xrightarrow{j} \Hom(\underline V , \underline W)
    \end{equation}
  yields a \(G\)-equivariant map
  \[ Y \to \P \Hom(V, W).\]
  This map is a complete parametrisation of the \(\PGL V \times \overline H\)-orbit of \([s]\).
  It is easy to check that the stabiliser of \([s]\) in \(\PGL V \times \overline H\) maps isomorphically to \(\Gamma \subset \overline H\) under the second projection.
  By \Cref{prop:intzeta}, in \(A^*_G(\Hom(V,W))\) we have
  \[ |\Gamma|[\Orb(s)] = \int_Y c_{\rm top}(\coker(j \circ i)).\]
  From the sequence in \eqref{eq:twosteps}, we have an exact sequence
  \[ 0 \to \coker i \to \coker(j \circ i) \to \coker j \to 0.\]
  and hence the equality of top Chern classes
  \[ c_{\rm top}(\coker j \circ i) = c_{\rm top}(\coker i) \cdot c_{\rm top}(\coker j).\]
  Observe that
  \[ \coker j = \pi^* \Hom(\underline V, \phi^* Q). \]
  Since \(Y \to X\) is a projective space bundle and \(i\) is its universal sub-bundle, we have
  \[ \int_{Y/X} c_{\rm top}(\coker i) = [X].\]
  Now, the push-pull formula gives
  \[
    \int_{Y/X} c_{\rm top}(\coker i) \cdot c_{\rm top}(\coker j) = c_{\rm top}(\Hom(\underline V, \phi^* Q)).
  \]
  Integrating the two sides along \(X\) gives the result.
\end{proof}

We now study specialisations of orbits in families.
The results are essentially from \cite[\S~2.5]{lee.pat.tse:19}, but we state and prove them more generally.

Fix an action of an algebraic group \(G\) on a variety \(M\).
For simplicity, assume that \(G\) is irreducible.
Given \(m \in M\), define the \(G\)-equivariant cycle
\[
  \WOrb(m) = \begin{cases}
    |\Stab(m)| \cdot \Orb(m) & \text{ if \(\Stab(m)\) is finite,} \\
    0 & \text{otherwise}.
  \end{cases}
\]
Let \(\Delta\) be the spectrum of a DVR with general point \(\eta\) and special point \(0\).
Fix a section \(m_\eta \from \eta \to M_\eta\).
Let \(\Sigma\) be the cycle on \(M\) obtained as the limit of the cycle \(\WOrb(m_\eta)\) on \(M_\eta\).
Our goal is to understand \(\Sigma\).
We begin with an easy observation.

\begin{proposition}\label{prop:onelimit}
  Suppose \(m_\eta\) has a limit \(\overline m \in M\).
  Then the cycle \(\Sigma - \WOrb(\overline m)\) is effective.
\end{proposition}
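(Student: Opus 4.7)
The plan is to split on whether \(\Stab(\overline m)\) is finite. If it is infinite, then \(\WOrb(\overline m) = 0\) by definition, and the claim follows at once from the effectivity of \(\Sigma\), which is the specialisation of an effective cycle.

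So assume \(\Stab(\overline m)\) is finite, whence \(\dim \Orb(\overline m) = \dim G = \dim \Orb(m_\eta) = \dim \Sigma\). Let \(Z \subset M \times \Delta\) be the flat closure of the cycle \(\WOrb(m_\eta)\), so that \(\Sigma = Z|_{t = 0}\). Since the generic point \(m(\eta)\) of the section \(m \from \Delta \to M \times \Delta\) lies in \(\supp Z\), we get \(m(\Delta) \subset \supp Z\), so \(\overline m \in \supp \Sigma\). Because \(Z\) is \(G\)-invariant (its generic fiber is), so is \(\Sigma\); hence \(\Orb(\overline m) \subset \supp \Sigma\). Writing \(\Sigma = c \cdot [\Orb(\overline m)] + \Sigma'\) with \(\Sigma'\) effective and having no \([\Orb(\overline m)]\)-component, it suffices to prove \(c \geq |\Stab(\overline m)|\).

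To pin down \(c\), I would use a transversal slice at a point \(p\) in the open \(G\)-orbit of \(\overline m\). Pick a locally closed \(T \subset M\) of complementary codimension meeting \(\Orb(\overline m)\) transversally at \(p\) and at no other point in a chosen open neighbourhood \(U\) of \(p\). For generic such \(T\), the cycle-theoretic intersection \((T \times \Delta) \cap Z\), restricted to \(U \times \Delta\), is a flat family of \(0\)-cycles over \(\Delta\) whose length at \(t = 0\) equals \(c\). By flatness, the length at \(t = \eta\) equals \(c\) as well, and it also equals \(|\Stab(m_\eta)| \cdot \#(T \cap \Orb(m_\eta) \cap U)\). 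The proof concludes once one shows \(\#(T \cap \Orb(m_\eta) \cap U) = |\Stab(\overline m)|/|\Stab(m_\eta)|\); this quotient is an integer because the flat closure in \(G \times \Delta\) of the stabiliser group scheme of \(m(t)\) specialises at \(t = 0\) to a subgroup scheme of \(\Stab(\overline m)\) of order \(|\Stab(m_\eta)|\), making \(|\Stab(m_\eta)|\) divide \(|\Stab(\overline m)|\).

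The count is established by the analogous flatness argument upstairs. Setting \(\Phi \from G \times \Delta \to M \times \Delta\), \((g, t) \mapsto (g \cdot m(t), t)\), consider \(\Phi^{-1}(T \times \Delta) \subset G \times \Delta\) in a neighbourhood of \(\alpha^{-1}(p)\), where \(\alpha \from G \to M\), \(g \mapsto g \cdot \overline m\). The fiber at \(t = 0\) has length \(|\Stab(\overline m)|\), since \(\alpha\) is étale at each of the \(|\Stab(\overline m)|\) points of \(\alpha^{-1}(p)\) and \(T\) is transverse to \(\Orb(\overline m)\) there. Flatness over \(\Delta\) forces the fiber at \(t = \eta\) to have length \(|\Stab(\overline m)|\) as well. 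The free right action of \(\Stab(m_\eta)\) on \(G\) preserves the condition \(g \cdot m_\eta \in T\) and the image point in \(\Orb(m_\eta)\), so these \(|\Stab(\overline m)|\) points of \(G\) project to \(|\Stab(\overline m)|/|\Stab(m_\eta)|\) distinct points of \(\Orb(m_\eta) \cap T\) near \(p\). The main technical point to verify is the generic-position flatness for both cycle-level intersections, which reduces to a standard Bertini-type transversality statement once \(T\) is chosen generically.
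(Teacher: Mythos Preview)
Your strategy is workable but takes a genuinely different route from the paper, and the flatness you flag at the end is a real gap rather than a routine Bertini statement.  The paper never computes the multiplicity of \(\Orb(\overline m)\) in \(\Sigma\).  Instead it chooses an equivariant compactification \(X \supset G\), lets \(Y \subset X \times M \times \Delta\) be the closure of the graph of \(g \mapsto g\,m_\eta\), and observes that \(\pi_*[Y_\eta] = \WOrb(m_\eta)\) while \(\pi_*[Y_0] = \Sigma\).  Since \((1,\overline m,0) \in Y_0\) and \(G\) acts with trivial stabiliser on \(Y\) (it acts freely on the \(X\)-factor), the closure \(Z\) of \(G\cdot(1,\overline m,0)\) is a full irreducible component of \(Y_0\) with \(\pi_*[Z] = \WOrb(\overline m)\).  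Then \(\Sigma - \WOrb(\overline m) = \pi_*([Y_0]-[Z])\) is the push-forward of an effective cycle.  This handles the finite and infinite stabiliser cases uniformly and generalises with no extra work to the several-limits version that follows.

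In your argument, the assertion that \((T\times\Delta)\cap Z\) is flat over \(\Delta\) near \((p,0)\) would require the closure \(Z\) to be Cohen--Macaulay along the slice, which a generic choice of \(T\) does not arrange.  What your upstairs computation legitimately yields is that \(\Phi^{-1}(T\times\Delta)\to\Delta\) is \'etale near each point of \(\alpha^{-1}(p)\); this produces \(|\Stab(\overline m)|/|\Stab(m_\eta)|\) distinct arcs in \(\overline{\Orb(m_\eta)}\cap(T\times\Delta)\) passing through \((p,0)\), and each contributes at least one to the length of the special fibre there.  That gives the inequality \(c \geq |\Stab(\overline m)|\) directly, without any flatness downstairs and without needing the exact equality \(\#(T\cap\Orb(m_\eta)\cap U)=|\Stab(\overline m)|/|\Stab(m_\eta)|\) (which is also not clear, since other points of \(\Orb(m_\eta)\cap T\) could wander into \(U\)).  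So your several claimed equalities should be replaced by inequalities; with that correction the argument goes through, but it is substantially heavier than the paper's two-line proof via the compactified graph.
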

\begin{proof}
  Let \(X \supset G\) be an equivariant compactification.
  Consider the map \(\alpha \colon G_\eta \to M_\eta\) defined by \(g \mapsto g m_\eta\), and let \(Y \subset X \times M \times \Delta\) be the closure of the graph of \(\alpha\).
  Then \(Y\) is irreducible of dimension \(\dim G + 1\).
  Let \(\pi \colon Y \to M\) be the projection.
  Observe that we have
  \[ {\pi}_* Y_\eta = \WOrb(m_\eta) \text{ and } {\pi}_* Y_0 = \Sigma.\]
  By construction, \(Y\) is invariant under the \(G\)-action on \(X \times M \times \Delta\).
  Since \(y = (1,\overline m, 0) \in Y_0\), we have a copy of \(G \subset Y_0\) given by \(G \cdot y\).
  Let \(Z \subset Y_0\) be its closure.
  Note that \({\pi}_* Z = \WOrb(\overline m)\).
  Furthermore, since \(\dim Z = \dim Y_0\), we see that \(Z \subset Y_0\) is an irreducible component.
  Then \(\Sigma - \WOrb(\overline m)\) is the push-forward of the effective cycle \([Y_0] - [Z]\).
\end{proof}

We have the following strengthening of \Cref{prop:onelimit} (see \cite[Proposition~2.13]{lee.pat.tse:19}).
\begin{proposition}\label{prop:manylimits}
  Consider sections \(g_1, \dots, g_n \from \eta \to G_\eta\) such that for all \(i \neq j\), the section \(g_ig_j^{-1}\) does not extend to a section \(\Delta \to G_\Delta\).
  Suppose \(\overline m_1, \dots, \overline m_n \in M\) are limits of \(g_1 m_\eta, \dots, g_n m_\eta\).
  Then the cycle \(\Sigma - \WOrb(\overline m_1) - \dots - \WOrb(\overline m_n)\) is effective.
\end{proposition}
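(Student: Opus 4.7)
I would mirror the proof of \Cref{prop:onelimit}: take an equivariant compactification \(X \supset G\), form the irreducible variety \(Y \subset X \times M \times \Delta\) as the closure of the graph of \(G_\eta \to M_\eta\), \(g \mapsto g m_\eta\), and let \(\pi \colon Y \to M\) be the projection. Then \(\pi_*[Y_\eta] = \WOrb(m_\eta)\) and hence \(\pi_*[Y_0] = \Sigma\). The aim will be to exhibit \(n\) distinct irreducible components \(Z_1, \dots, Z_n\) of \(Y_0\), one associated to each \(g_\ell\), with \(\pi_*[Z_\ell] = \WOrb(\overline m_\ell)\). The effective cycle \([Y_0] - \sum_\ell [Z_\ell]\) will then push forward to the claimed effective cycle \(\Sigma - \sum_\ell \WOrb(\overline m_\ell)\).

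To produce the \(Z_\ell\), properness of \(X\) extends \(g_\ell \in G(K)\) to a section \(\overline g_\ell \colon \Delta \to X\); combined with the given extension of \(g_\ell m_\eta\) to \(\overline m_\ell\), this gives a section \(\sigma_\ell \colon \Delta \to Y\) with \(\sigma_\ell(0) = (\overline g_\ell(0), \overline m_\ell, 0) =: y_\ell\). If \(\Stab(\overline m_\ell)\) is infinite then \(\WOrb(\overline m_\ell) = 0\) and that index drops out. Otherwise \(\Stab(y_\ell) \subset \Stab(\overline m_\ell)\) is finite, so \(G \cdot y_\ell\) has dimension \(\dim G = \dim Y_0\) and its closure \(Z_\ell \subset Y_0\) is an irreducible component. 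A degree calculation modelled on that in \Cref{prop:onelimit} should then give \(\pi_*[Z_\ell] = \WOrb(\overline m_\ell)\), provided the compactification \(X\) is chosen so that left-multiplication acts with trivial stabiliser at \(\overline g_\ell(0)\).

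The main obstacle will be proving that the \(Z_\ell\) are pairwise distinct. If \(Z_i = Z_j\), then \(G \cdot y_i\) and \(G \cdot y_j\) are both dense of the same dimension in the common component, so they coincide, giving \(y_j = h y_i\) for some \(h \in G\). Replacing \(g_j\) by \(h^{-1} g_j\) preserves the hypothesis on the pairwise ratios (as \(h \in G(R)\)), reducing to the case \(y_i = y_j\). One then needs to deduce from \(y_i = y_j\) that \(g_i\) and \(g_j\) lie in the same \(G(R)\)-coset, which would contradict the hypothesis. The set-theoretic equality alone is not enough to force this; the key step is a local analysis of \(Y\) near \(y_i\) identifying \(\Delta\)-sections of \(Y\) through \(y_i\) with \(G(R)\)-cosets in \(G(K)\). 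This is where the non-extension hypothesis for each \(g_i g_j^{-1}\) plays its essential role, and the detailed argument follows \cite[Proposition~2.13]{lee.pat.tse:19}.
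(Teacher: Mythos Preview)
Your overall strategy---find irreducible components \(Z_1,\dots,Z_n\) of the central fibre \(Y_0\) with \(\pi_*[Z_\ell]=\WOrb(\overline m_\ell)\) and push forward the effective cycle \([Y_0]-\sum[Z_\ell]\)---is exactly the paper's strategy, but the paper realises it with a different and sharper construction. Rather than taking the closure of the graph of \(g\mapsto g m_\eta\) in \(X\times M\times\Delta\) as you do, the paper embeds \(G_\eta\) in \(X^{\,n}\times M\times\Delta\) via
\[
  \alpha\colon g \longmapsto \bigl(gg_1^{-1},\dots,gg_n^{-1},\,gm_\eta\bigr),
\]
and lets \(Y\) be the closure of the image. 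The limit of \(\alpha(g_i)\) is
\(y_i=(\overline g_{i,1},\dots,\overline g_{i,i-1},1,\overline g_{i,i+1},\dots,\overline g_{i,n},\overline m_i)\),
with \(\overline g_{i,j}\in X\) the limit of \(g_ig_j^{-1}\). The \(i\)-th coordinate of every point of \(G\cdot y_i\) lies in \(G\) (it is \(g\cdot 1\)), whereas for \(j\neq i\) the \(j\)-th coordinate lies in \(X\smallsetminus G\) by the non-extension hypothesis. Hence the orbits \(G\cdot y_i\) are pairwise \emph{disjoint}, their closures \(Z_i\) are distinct components of \(Y_0\), and effectivity follows. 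A side benefit: since the \(i\)-th coordinate of \(y_i\) is \(1\in G\), the stabiliser of \(y_i\) is automatically trivial, so no special choice of compactification is needed to get \(\pi_*[Z_i]=\WOrb(\overline m_i)\).

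Your approach, staying in \(X\times M\times\Delta\), runs into precisely the obstacle you flag, and your proposed fix does not close it. Nothing prevents \(\overline g_i(0)=\overline g_j(0)\) in \(X\) and \(\overline m_i=\overline m_j\) in \(M\) simultaneously, in which case \(y_i=y_j\) and \(Z_i=Z_j\) as subvarieties. Your suggested ``local analysis of \(Y\) near \(y_i\) identifying \(\Delta\)-sections through \(y_i\) with \(G(R)\)-cosets'' does not produce distinct components or extra multiplicity: two distinct sections of a flat family through the same point of the central fibre impose no lower bound on the multiplicity of that fibre component (already for \(\A^1\times\Delta\to\Delta\) there are infinitely many sections through the origin while the central fibre is reduced). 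In particular, the argument you cite from \cite[Proposition~2.13]{lee.pat.tse:19} is not a local analysis in \(X\times M\times\Delta\); it \emph{is} the \(X^{\,n}\) embedding above. That extra-factors trick is the missing idea in your plan.
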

\begin{proof}
  Let \(X \supset G\) be an equivariant compactification.
  Consider the map \(\alpha \colon G_\eta \to G_\eta^n \times M_\eta\) given by
  \[ \alpha \colon g \mapsto (gg_1^{-1}, \dots, gg_n^{-1}, gm_\eta)\]
  and let \(Y \subset X^n \times M \times \Delta\) be the closure of its image.
  Denoting by \(\pi\) the projection onto the \(M\) factor, we have
  \[ {\pi}_* Y_\eta = \WOrb(m_\eta) \text{ and } {\pi}_* Y_0 = \Sigma.\]
  Let \(\overline g_{i,j} \in X\) be the limit of \(g_ig_j^{-1}\).
  Then \(\overline g_{i,j} \not \in G\) for \(i \neq j\).
  Then the limit of \(\alpha(g_i)\) is the point
  \[ y_i = (\overline g_{i,1}, \dots, \overline g_{i,i-1}, 1, \overline g_{i,i+1}, \overline g_{i,n}, \overline m_i) \in Y_0.\]
  Each \(y_i\) gives a copy of \(G \subset Y_0\), say \(G_i\), defined by \(G_i = G \cdot y_i\).
  Since \(\overline g_{i,j} \not \in G\) for \(i \neq j\), the \(j\)-th components of the points of \(G_i\) do not lie in \(G\) for \(i \neq j\) and lie in \(G\) for \(i = j\).
  In particular, for \(i \neq j\), the \(G_i\) and \(G_j\) are disjoint.  
  Let \(Z_i\) be the closure of \(G_i\).
  Then the \(Z_i\) are distinct irreducible components of \(Y_0\).
  The cycle \(\Sigma - \WOrb(\overline m_1) - \dots - \WOrb(\overline m_n)\) is the \(\pi\)-pushforward of the effective cycle \(Y_0 - Z_1 - \dots - Z_n\).
\end{proof}

\section{Vanishing sequences and the Wronskian}\label{sec:van}
Let \(C\) be a smooth curve, \(L\) a line bundle of degree \(d\) on \(C\), and \(S \subset H^0(C,L)\) a subspace of dimension \(r+1\).
The pair \((S,L)\) is called a \emph{linear series of rank \(r\) and degree \(d\)} or a \(g^r_d\) on \(C\).
Given a point \(p \in C\), there exists a unique sequence of integers \(a_0(p) < \dots < a_r(p)\) in \(\{0,\dots,d\}\) such that \(S\) contains a section \(s_i\) that vanishes to order exactly \(a_i(p)\) at \(p\).
The sequence \(a_i(p)\) is called the \emph{vanishing sequence} of \(S\) at \(p\).
The sections \(s_0, \dots, s_r\) form a basis of \(S\).

Denote by \(j_{r+1}L\) the bundle of jets of order \(r+1\) of \(L\);
that is 
\[ j_{r+1}L = {\pi_2}_* \left( \pi_1^* L \otimes O_{(r+1) \Delta} \right).\]
Here \(\Delta \to C \times C\) is the diagonal and the \(\pi_i\) are the two projections.
The linear series \(S \subset H^0(C, L)\) induces an evaluation map
\[ \underline S \to L,\]
and also a map
\begin{equation}\label{eq:jetmap}
  \underline S \to j_{r+1} L.
\end{equation}
The \emph{Wronskian} of \(S\) is the determinant of \eqref{eq:jetmap}.

\begin{proposition}\label{prop:wronskian}
  The Wronskian is non-zero and vanishes to order \(\sum a_i(p)-i\) at \(p\).
\end{proposition}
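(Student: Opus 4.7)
The plan is a direct local computation at $p$. Choose a basis $s_0, \ldots, s_r$ of $S$ with $\mathrm{ord}_p s_i = a_i(p)$, a uniformizer $t$ at $p$, and a local trivialization of $L$, so each $s_i$ has the form $t^{a_i} g_i(t)$ with $g_i(0) \neq 0$. In the basis $\epsilon_j = (u-v)^j$ of the stalk $(j_{r+1}L)_p$, the jet map \eqref{eq:jetmap} has matrix $M_{ij}(t) = s_i^{(j)}(t)/j!$, and the Wronskian is $\det M$. Since each $g_i(t)$ is a unit in $R = O_{C,p}$, I pass to the $R$-basis $\hat s_i = s_i/g_i$ of $S \otimes R$; this multiplies $\det M$ by a unit, so valuations are unaffected.

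In the normalized matrix $\hat M$, the entry $\hat M_{ij}$ begins with $\binom{a_i}{j} t^{a_i - j}$ when $j \leq a_i$, and otherwise lies in $R$ (nonnegative $t$-valuation). Expanding $\det \hat M = \sum_{\sigma} \mathrm{sgn}(\sigma)\prod_{i}\hat M_{i,\sigma(i)}$ and using $\sum_i\sigma(i) = \sum_i i$, a short rearrangement shows the $t$-valuation of the $\sigma$-term is at least $\sum_i(a_i - i) + \sum_i \max(\sigma(i) - a_i, 0)$, with equality precisely on the admissible permutations satisfying $\sigma(i) \leq a_i$ for all $i$. Since $\binom{a_i}{\sigma(i)} = 0$ whenever $\sigma(i) > a_i$, the coefficient of $t^{\sum(a_i - i)}$ in $\det \hat M$ collapses to the full combinatorial determinant $\det\bigl(\binom{a_i}{j}\bigr)_{0 \leq i, j \leq r}$.

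The last step is to verify this leading coefficient is nonzero. Writing $\binom{a}{j}$ as a polynomial in $a$ of degree $j$ with leading coefficient $1/j!$ identifies $\det(\binom{a_i}{j})$ with the rescaled Vandermonde
\[
\frac{1}{\prod_{j=0}^r j!}\prod_{0 \leq i < j \leq r}(a_j - a_i).
\]
Under the standing hypothesis that $d!$ is invertible in $k$, both the factorials (since $j \leq r \leq d$) and the differences (since $0 \leq a_i < a_j \leq d$) are nonzero in $k$, so the leading coefficient is a unit. This simultaneously shows that the Wronskian is nonzero and that its order of vanishing at $p$ is exactly $\sum_i(a_i(p) - i)$.

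The only substantive obstacle is ruling out a cancellation at the leading order; this is precisely what the matching of admissible permutations with nonvanishing binomial entries, combined with the Vandermonde identification and the characteristic hypothesis, accomplishes.
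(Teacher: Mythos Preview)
Your proof is correct and follows essentially the same approach as the paper's: a local computation in a uniformizer, identification of the Wronskian matrix, extraction of the lowest-order term as a determinant of falling factorials/binomials, and reduction to a Vandermonde determinant using the hypothesis on the characteristic. Your version is in fact more carefully justified---the paper simply asserts that the lowest-degree term of the determinant is \(t^{\sum(a_i-i)}\) with the stated coefficient, whereas your permutation-by-permutation valuation bound and the observation that inadmissible permutations contribute zero (via \(\binom{a_i}{j}=0\) for \(j>a_i\)) make this explicit; the cosmetic differences (normalising by the units \(g_i\), using \(\binom{a_i}{j}\) rather than \(a_i(a_i-1)\cdots(a_i-j+1)\)) do not change the substance.
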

\begin{proof}
  The proof is a part of \cite[Proposition~1.1]{eis.har:83}.
  We give it for completeness.
  
  Let \(s_i \in S\) vanish to order \(a_i = a_i(p)\) at \(p\).
  Denote by \(\widehat M\) the completion of an \(O_C\)-module \(M\) at \(p\).
  Choose an isomorphism of \(\k\llbracket t \rrbracket\)-modules \(\widehat L \cong \k\llbracket t \rrbracket\).
  Then we have an isomorphism 
  \[
    \widehat{j_{r+1}L} \cong \k\llbracket t,u \rrbracket/(u-t)^{r+1}.
  \]
  If the section \(s \in \widehat L \cong \k\llbracket t\rrbracket\) is given by the power series \(s(t)\), then the induced section \(j_{r+1}(s)\) of \(\widehat{j_{r+1} L}\) is given by
  \[ j_{r+1}(s) = s(t) + s'(t)(u-t) + \dots + s^{(r)}(t)\frac{1}{r!} (u-t)^r.\]
  Let \(s_i\) correspond to the power series \(s_i(t) = t^{a_i}z_i \in \k\llbracket t \rrbracket\), where \(z_i \in \k\llbracket t \rrbracket\) is a unit.
  With respect to the bases \(s_0, \dots, s_{r+1}\) of \(S\) and \(1, (u-t), \dots, (u-t)^{r}/r!\) of \(\widehat{j_{r+1}L}\), the matrix of the map \eqref{eq:jetmap} is given by
  \[
    \begin{pmatrix}
      t^{a_0} z_0 & t^{a_1}z_1 & \dots & t^{a_r}z_r \\
      (t^{a_0} z_0)' & (t^{a_1}z_1)' & \dots & (t^{a_r}z_r)' \\
      \vdots & \vdots & \ddots & \vdots \\
      (t^{a_0} z_0)^{(r)} & (t^{a_1}z_1)^{(r)} & \dots & (t^{a_r}z_r)^{(r)}.
    \end{pmatrix}
  \]
  The lowest degree term in its determinant is \(t^{\sum a_i - i}\) and its coefficient is the determinant of the \((r+1) \times (r+1)\) matrix \(C\) with \(C_{ij} = a_i(a_i-1)\cdots(a_i-j+1)\).
  It remains to show that the coefficient is non-zero.
  By a sequence of invertible row transformations, the matrix \(C\) reduces to the Vandermonde matrix \(V\) with \(V_{ij} = a_i^j\).
  Since the \(a_i\)'s are distinct integers in \(\{0,\dots, d\}\), their images are distinct in \(\k\), which has characteristic 0 or characteristic \(> d\).
  Hence the determinant of \(V\), and hence of \(C\), is non-zero.
\end{proof}

Let \(\rho(S) \subset C\) be the vanishing locus of the Wronskian of \(S\).
We call \(\rho(S)\) the \emph{ramification divisor} of \(S\) and its points the \emph{ramification points}.
\begin{corollary}
  For all \(p \in C - \rho(S)\), we have \(a_i(p) = i\).
  In particular, for all but finitely many \(p\), we have \(a_i(p) = i\).
\end{corollary}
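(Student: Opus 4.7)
The corollary is essentially a direct consequence of \Cref{prop:wronskian}, so the plan is to unpack what that proposition says about the order of vanishing of the Wronskian.

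My first step is to observe that the vanishing sequence $a_0(p) < a_1(p) < \dots < a_r(p)$ is by definition a strictly increasing sequence of integers. Hence $a_i(p) \geq i$ for every $i$, and therefore each term $a_i(p) - i$ in the sum $\sum_i (a_i(p) - i)$ is non-negative. In particular, the sum is zero if and only if $a_i(p) = i$ for every $i$. This is the only genuinely new observation needed.

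Next, I invoke \Cref{prop:wronskian}, which says the Wronskian vanishes at $p$ to order exactly $\sum_i (a_i(p) - i)$. Combined with the previous step, this gives that $p$ lies in the vanishing locus $\rho(S)$ of the Wronskian if and only if the vanishing sequence at $p$ differs from $(0, 1, \dots, r)$. This proves the first assertion.

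For the second assertion, I use the other half of \Cref{prop:wronskian}: the Wronskian is a non-zero section of a line bundle on the smooth (irreducible) curve $C$. Its zero locus is therefore a proper closed subscheme of $C$, hence a finite set of points. So $\rho(S)$ is finite, and outside this finite set the vanishing sequence takes the value $(0,1,\dots,r)$.

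There is no real obstacle here; the only thing to be careful about is the elementary combinatorial fact that a strictly increasing sequence of non-negative integers satisfies $a_i \geq i$ with equality forced termwise by the sum vanishing. Everything else is an immediate citation of the preceding proposition.
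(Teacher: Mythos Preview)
Your proof is correct and is precisely the unpacking of the paper's one-line proof, which simply says the statement follows immediately from \Cref{prop:wronskian}. The combinatorial observation that $a_i(p)\geq i$ with equality forced by $\sum_i(a_i(p)-i)=0$ is exactly the implicit step the paper leaves to the reader.
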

\begin{proof}
  Follows immediately from \Cref{prop:wronskian}.
\end{proof}

Note that \(\rho(S)\) is the zero-locus of a section of \(\det j_{r+1} L\).
Recall that we have a sequence of surjective maps
\[ j_{r+1}L \to j_{r}L \to \dots \to j_2 L \to j_1 L = L \to 0\]
with
\[ \ker (j_{i+1}L \to j_iL) = L \otimes \Omega_C^{i}.\]
As a result, we get
\begin{equation}\label{eq:jetdet}
  \det j_{r+1} L  = L^{r+1} \otimes \Omega_C^{r(r+1)/2}.
\end{equation}

Consider the Grassmannian \(\Gr = \Gr(r+1, H^0(C,L))\) with the universal sub-bundle
\[ S \subset \underline{H^0(C,L)}.\]
On \(\Gr \times C\), we have the universal evaluation map
\[ \pi_1^* S \to \pi_2^* L,\]
which induces a map
\[ \pi_1^* S \to \pi_2^* j_{r+1} L.\]
Taking the determinant of the last map and applying \({\pi_1}_*\) gives the following map on \(\Gr\):
\[ \det S \to \underline {H^0(C, \det j_{r+1} L)}.\]
By \Cref{prop:wronskian}, the map above is non-zero at every point of \(\Gr\), and hence yields a morphism
\begin{equation}\label{eq:wronskimap}
  \rho \colon \Gr(r+1, H^0(C, L)) \to \P H^0(C, \det j_{r+1}L).
\end{equation}
\begin{proposition}\label{prop:wronskimap}
  The map \(\rho\) in \eqref{eq:wronskimap} is finite.
  For \(C = \P^1\), it is also surjective.
\end{proposition}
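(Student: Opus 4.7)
The plan is to prove finiteness via the ample-pullback criterion for proper morphisms, and to deduce surjectivity in the \(\P^1\) case from a dimension count. I would begin by observing that the Wronskian, being alternating multilinear in a chosen basis, factors through the exterior power to give a linear map
\[
  W \from \bigwedge^{r+1} H^0(C, L) \to H^0(C, \det j_{r+1}L).
\]
Via the Pl\"ucker embedding \(\Gr \hookrightarrow \P\bigl(\bigwedge^{r+1} H^0(C, L)\bigr)\), the map \(\rho\) becomes the restriction to \(\Gr\) of the projective-linear map \([W]\), whose indeterminacy locus is \(\P \ker W\). By \Cref{prop:wronskian}, the Wronskian of any \((r+1)\)-dimensional subspace \(S\) is nonzero, so the Pl\"ucker image of \(\Gr\) avoids \(\P \ker W\); thus \(\rho\) is a morphism, and since \(\Gr\) is projective, it is proper.

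The next step is to identify the line bundle \(\rho^* O_{\P H^0}(1)\). Since \([W]\) is projective-linear, it pulls \(O(1)\) back to \(O(1)\) on the Pl\"ucker ambient space; the Pl\"ucker embedding in turn pulls \(O(1)\) back to \(\det S^\vee\), where \(S\) denotes the tautological subbundle on \(\Gr\). Hence \(\rho^* O(1) = \det S^\vee\), which is the ample Pl\"ucker generator of \(\operatorname{Pic}(\Gr)\). I would then invoke the standard fact that a proper morphism whose pullback of some line bundle is ample must be finite: on any fiber, an ample line bundle that is also trivial forces the fiber to be zero-dimensional. This gives finiteness of \(\rho\).

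For surjectivity when \(C = \P^1\), I would combine finiteness with a dimension count. From \eqref{eq:jetdet} we have \(\det j_{r+1}L = O_{\P^1}((r+1)(d-r))\), so \(\dim \P H^0(\P^1, \det j_{r+1}L) = (r+1)(d-r)\), which matches \(\dim \Gr(r+1, \Sym^d \k^2)\). Since \(\rho\) is finite and proper, its image is a closed irreducible subvariety of full dimension inside the irreducible target of the same dimension, hence equal to \(\P H^0\). The only potentially subtle point in the whole argument is the pullback computation \(\rho^* O(1) = \det S^\vee\), but this is immediate from the Pl\"ucker factorisation.
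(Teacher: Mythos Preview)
Your proof is correct and takes essentially the same approach as the paper's. The paper phrases finiteness via the fact that the Grassmannian has Picard rank one (so any non-constant morphism from it is finite), which when unpacked is precisely your observation that \(\rho^* O(1) = \det S^\vee\) is the ample Pl\"ucker generator; the surjectivity argument via the dimension count using \eqref{eq:jetdet} is identical.
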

\begin{proof}
  Assume that \(\dim H^0(C, L) > r+1\); otherwise, the statement is vacuous.
  
  Since the Grassmannian has Picard rank one, any morphism from it is either finite or constant.
  It is easy to see that \(\rho\) is not constant; so it must be finite.

  For \(C = \P^1\) and \(L = O(d)\), we have by \eqref{eq:jetdet}
  \[ \det j_{r+1} L \cong O((r+1)d) \otimes O(-r(r+1)) = O((r+1)(d-r)).\]
  Hence, both the source and target of \(\rho\) have the same dimension, namely \((r+1)(d-r)\).
\end{proof}
  
\section{Complete parametrisation of the orbit of a linear series}\label{sec:res}
Fix a \(2\)-dimensional vector space \(V\) and an \((r+1)\)-dimensional subspace \(S \subset \Sym^{d}V\).

On \(\P V \cong \P^1\), let \(O(-1) \subset \underline V\) be the universal sub-bundle and \(Q\) the quotient \(\underline V / O(-1)\).
Then we have a canonical identification \(\Sym^d V = H^0(\P V, Q^d)\).
We regard \(S\) as a linear series associated to the line bundle \(Q^d\).

Let \(W\) be another \(2\)-dimensional vector space.
Consider the 
\[ \P^{3} = \P \Hom(V,W).\]
We have a rational map
\[ \phi \colon \mathbf{P}^3 \dashrightarrow \Gr(r+1, \Sym^{d}W)\]
defined by
\begin{equation}\label{eq:mmap}
  m \mapsto [m(S)].
\end{equation}
Note that the map is equivariant with respect to the obvious \(\GL W\) action.
The goal of this section is to understand a \(\GL W\)-equivariant resolution of \(\phi\).

Let \(B \subset \mathbf{P} V\) be the ramification divisor of \(S\).
For every \(b \in B\), let \(L_{b} \subset \mathbf{P}^{3}\) be the linear subspace
\begin{equation}\label{eq:lb}
  L_{b} = \{[m] \mid m(b) = 0\}.
\end{equation}
Then \(L_{b}\) is a copy of  \(\P^{1}\) linearly embedded in \(\P^3\), and for \(b_{1} \neq b_{2}\), the lines \(L_{b_1}\) and \(L_{b_2}\) are disjoint.
Let
\[ \beta \colon \widetilde {\P}^{3} \to \P^{3}\]
be the blow-up at \(\bigsqcup_{b \in B} L_{b}\).
Note that the lines \(L_{b}\) are \(\GL W\)-invariant and hence the blow-up \(\widetilde{\P}^{3}\) inherits an action by \(\GL W\).
See \Cref{fig:blowup} for a sketch of \(\P^3\) and the lines \(L_b\).

\begin{figure}
  \centering
  \begin{tikzpicture}
    \draw[thick, red]
    (-1, -1.7) -- (0,1.6)
    (0, -1.7) -- (1,1.6);
    \draw[thick, blue, dashed]
    (0.5,-1.7) -- (-1,1.6);
    \draw[fill=black!5!white] (0,1.5) ellipse (2cm and 0.25cm);
    \draw (2,-1.5) arc (0:-180:2cm and 0.25cm);
    \draw plot [smooth, tension=1] coordinates {(-2,1.5) (-1,0) (-2,-1.5)};
    \draw plot [smooth, tension=1] coordinates {(2,1.5) (1,0) (2,-1.5)};
  \end{tikzpicture}
  \caption{
    The map \(\phi \colon \P^3 \dashrightarrow \Gr(r+1,\Sym^d W)\) becomes regular after blowing up the lines \(L_b\) (red) on the discriminant quadric \(\Delta\).
    Along the lines of the opposite ruling (blue, dashed), the map is constant.    
  }\label{fig:blowup}
\end{figure}
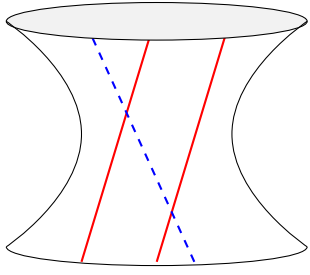

The key observation is the  following consequence of a result of Aluffi and Faber \cite{alu.fab:93*1}.
\begin{theorem}\label{thm:res}
  The rational map
  \[\phi \colon \P^{3} \dashrightarrow \Gr(r+1, \Sym^{d}W)\] extends to a regular map
  \[\widetilde \phi \colon \widetilde{\P}^{3} \to \Gr(r+1, \Sym^{d}W).\]
\end{theorem}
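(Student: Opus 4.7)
The plan is to reduce to the \(r = 0\) case of the theorem, which is due to Aluffi and Faber \cite{alu.fab:93*1}, by composing \(\phi\) with the Wronskian map \(\rho\) of \Cref{prop:wronskimap}, and then to lift the resulting resolution back to \(\phi\) using the finiteness of \(\rho\).

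First, I would identify the composite rational map \(\rho \circ \phi \colon \P^3 \dashrightarrow \P \Sym^{(r+1)(d-r)} W\). For invertible \(m \in \Hom(V,W)\), the induced map \(\Sym^d m\) is an isomorphism, and the Wronskian construction is natural under isomorphisms of the underlying \(\P^1\). Hence on the open locus \(\P^3 \setminus \Delta\), the composite sends
\[ [m] \mapsto [\Sym^{(r+1)(d-r)} m \, (w_S)],\]
where \(w_S \in \Sym^{(r+1)(d-r)} V\) is the Wronskian of \(S\). By continuity, this identification extends to an equality of rational maps on all of \(\P^3\). I expect this to be the main technical step: it reduces to the determinant identity \(w_{m(S)} = \Sym^{(r+1)(d-r)} m\, (w_S)\) (up to scalar) for invertible \(m\), which follows from the chain rule applied to the matrix of derivatives defining the Wronskian.

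Second, I would apply the \(r = 0\) case of the theorem, namely Aluffi and Faber's result \cite{alu.fab:93*1}, to the section \(w_S\). By \Cref{prop:wronskian}, the zero locus of \(w_S\) on \(\P V\) is precisely the ramification divisor \(B\). That result therefore produces a regular extension of \(\rho \circ \phi\) to the blow-up of \(\P^3\) along \(\bigsqcup_{b \in B} L_b\), which is exactly \(\widetilde \P^3\).

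Finally, to lift this extension from \(\rho \circ \phi\) to \(\phi\) itself, let \(Z \subset \widetilde \P^3 \times \Gr(r+1, \Sym^d W)\) be the closure of the graph of \(\phi \circ \beta\), with projections \(p \colon Z \to \widetilde \P^3\) and \(q \colon Z \to \Gr(r+1, \Sym^d W)\). Then \(p\) is proper and birational. Since \(\rho \circ q\) and the composition of \(p\) with the regular extension of \(\rho \circ \phi\) agree on the dense open where \(\phi \circ \beta\) is regular, they agree on all of \(Z\). Consequently each fiber \(p^{-1}(x)\) is contained in a fiber of \(\rho\), which is finite by \Cref{prop:wronskimap}. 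Thus \(p\) is proper, birational, and quasi-finite; since \(\widetilde \P^3\) is smooth, Zariski's main theorem forces \(p\) to be an isomorphism, and inverting \(p\) produces the desired morphism \(\widetilde \phi\).
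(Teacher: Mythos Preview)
Your proposal is correct and follows essentially the same route as the paper: compose with the finite Wronskian map \(\rho\), invoke Aluffi--Faber to resolve \(\rho \circ \phi\) on \(\widetilde{\P}^3\), and then lift through \(\rho\) via a Zariski's-main-theorem argument. The paper packages your final step as a standalone lemma (\Cref{lem:zmt}) phrased for an arbitrary finite map and a normal source, but your inline graph-closure argument is equivalent.
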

For the proof, we need a simple lemma.
\begin{lemma}\label{lem:zmt}
  Let \(\rho \colon X \to Y\) be a finite map between varieties.
  Let \(P\) be an irreducible normal variety with a rational map \(P \dashrightarrow X\).
  If the composite \(P \dashrightarrow Y\) extends to a regular map \(P \to Y\), then so does \(P \dashrightarrow X\).
\end{lemma}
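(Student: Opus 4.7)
The plan is to realise the desired extension by taking the normalisation of $P$ in the function field of $X$ and showing it coincides with $P$ itself. More concretely, I would use the fibre product construction together with Zariski's Main Theorem in the form: a finite birational morphism to a normal irreducible variety is an isomorphism.

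First, let $U \subset P$ be the (dense) open locus of definition of the rational map, and write $f \from U \to X$ for the corresponding morphism and $g \from P \to Y$ for the given regular extension of $\rho \circ f$. Since $\rho \from X \to Y$ is finite, it is in particular affine and proper, so the fibre product
\[ Z = P \times_Y X \]
is well-defined, and the first projection $\pi \from Z \to P$ is finite. Over $U$, the pair $(\mathrm{id}_U, f)$ defines a section $\sigma \from U \to Z$ of $\pi$, because $g|_U = \rho \circ f$ by hypothesis.

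Next, let $P' \subset Z$ be the scheme-theoretic closure of $\sigma(U)$, equipped with its reduced structure. The restriction $\pi|_{P'} \from P' \to P$ is finite, since it is a closed immersion followed by a finite map. It is also birational: the section $\sigma$ exhibits $U$ as an open subscheme of $P'$ mapping isomorphically onto the dense open $U \subset P$, and since $P$ is irreducible, $P'$ must have a unique irreducible component dominating $P$; the closure condition then forces $P'$ to equal this component. Now invoke Zariski's Main Theorem: a finite birational morphism to a normal irreducible variety is an isomorphism, so $\pi|_{P'} \from P' \to P$ is an isomorphism. Composing its inverse with the second projection $Z \to X$ gives a regular morphism $P \to X$ whose restriction to $U$ is $f$, which is the required extension.

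The only real subtlety is the verification that $P'$ is actually birational to $P$ rather than a proper finite cover, and for this the normality of $P$ is essential—without it, the section $\sigma$ could in principle extend only to a non-trivial branched cover. I do not expect any other obstacles; the argument is a routine application of Zariski's Main Theorem, exactly as used to show that rational maps from smooth (or normal) varieties to proper schemes extend across codimension-one loci, adapted here to the finite setting.
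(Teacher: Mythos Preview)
Your proof is correct and follows essentially the same route as the paper: both take the closure of the graph of \(f\) (your \(P' \subset Z = P \times_Y X\) is exactly the paper's \(\Gamma_X \subset P \times X\), since \(Z\) is closed in \(P \times X\)), observe that the projection to \(P\) is finite and birational, and conclude by the ``finite birational onto normal is an isomorphism'' form of Zariski's Main Theorem. The paper phrases the finiteness step slightly differently, factoring through \(\Gamma_Y \cong P\) rather than through the fibre product, but the content is identical. One small expository point: in your closing remarks, birationality of \(P' \to P\) is already guaranteed by the section \(\sigma\) over \(U\) and does not require normality; normality enters only in the application of Zariski's Main Theorem to upgrade birationality to isomorphism.
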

\begin{proof}
  Let \(\Gamma_X \subset P \times X\) and \(\Gamma_Y \subset P \times Y\) be the closures of the graphs of \(P \dashrightarrow X\) and \(P \dashrightarrow Y\), respectively.
  Then the projections \(\Gamma_X \to P\) and \(\Gamma_Y \to P\) are birational.
  In fact, by the hypothesis on \(P \dashrightarrow Y\), the projection \(\Gamma_Y \to P\) is an isomorphism.
  The map \(\Gamma_X \to \Gamma_Y\) induced by \(\rho\) is finite and birational.
  Since the target \(\Gamma_Y \cong P\) is normal, we conclude that \(\Gamma_X \to \Gamma_Y\) is an isomorphism.
  Hence the projection \(\Gamma_X \to P\) is also an isomorphism.
  We deduce that \(P \dashrightarrow X\) extends to a regular map.
\end{proof}

\begin{proof}[Proof of \Cref{thm:res}]
  Set
  \[ \Gr = \Gr(r+1, \Sym^d W)\]
  and
  \[ \P = \P H^0(\P^1, \det j_{r+1} O(d)) = \P \Sym^{(r+1)(d-r)}W.\]
  Recall that we have the finite map
  \[ \rho \colon \Gr \to \P\]
  defined by the Wronskian.
  By definition, \(\rho\) is \(\GL W\)-equivariant.

  Let \(R\) be the Wronksian of \(S\).
  Consider the rational map
  \[ \P^3 \dashrightarrow \P\]
  defined by
  \[ m \mapsto [m(R)].\]
  Then we have the commutative diagram
  \[
    \begin{tikzcd}
      \widetilde \P^3 \ar[dashed]{r} \ar[equal]{d}& \Gr \ar{d}{\rho}  \\
      \widetilde \P^3 \ar[dashed]{r}& \P.
    \end{tikzcd}
  \]
  By \cite[Proposition~1.2]{alu.fab:93*1}, the map \(\widetilde \P^3 \dashrightarrow \P\) extends to a regular map \(\widetilde \P^3 \to \P\).
  By \Cref{lem:zmt}, we conclude the same about \(\widetilde \P^3 \dashrightarrow \Gr.\)
  (In \cite{alu.fab:93*1}, the base field is assumed to have characteristic 0, but Proposition~1.2 and its proof hold in any characteristic.)
\end{proof}

Let \(\Delta \subset \P^3 = \P \Hom(V, W)\) be the locus of degenerate maps.
Then \(\Delta\) is a quadric hypersurface, defined by the vanishing of the determinant.
We have an isomorphism
\[\Delta \xrightarrow{\sim} \P V \times \P W\]
given by
\begin{equation}\label{eq:delta}
  m \mapsto (\ker m, \im m).
\end{equation}
Under this isomorphism, the line \(L_b\) defined in \eqref{eq:lb} corresponds to the line \(\{b\} \times \P W\).

We now do a simple calculation that allows us to understand the map
\[\widetilde \phi \colon \widetilde \P \to \Gr(r+1, \Sym^d W).\]
Choose bases \(\langle  v_1, v_2 \rangle\) for \(V\) and \(\langle w_1, w_2 \rangle\) for \(W\).
Let us write elements of \(\Hom(V, W)\) as \(2 \times 2\)-matrices with respect to these bases.
Consider the one-parameter family of homomorphisms given by
\begin{equation}\label{eq:mt}
  m_t = \begin{pmatrix} 1 & 0 \\ 0 & t \end{pmatrix}, \quad t \in \A^1.
\end{equation}
For \(t \neq 0\), let
\[S_t = m_t(S) \subset \Sym^d W,\]
and
\[
  S_0 = \lim_{t \to 0} S_t \subset \Sym^d W.
\]
The following proposition describes \(S_0\).
\begin{proposition}\label{prop:key}
  Let \(S_0\) be defined as above.
  Let \(b \in \P V\) be the point corresponding to \([v_2] \subset V\) and let \(a_0 < \dots <a_r\) be the vanishing sequence of \(S\) at \(b\).
  Then \(S_0 \subset \Sym^d W\) is given by
  \[ S_0 = \langle  w_1^{d-a_0}w_2^{a_0}, \dots, w_1^{d-a_r}w_2^{a_r}\rangle.\]
\end{proposition}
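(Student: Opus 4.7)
The plan is to choose a basis of \(S\) adapted to the vanishing filtration at \(b\), apply \(\Sym^d m_t\), and read off the \(t \to 0\) limit after rescaling each basis vector so that its leading \(t\)-power survives.

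First I pin down the vanishing-order convention. Under the identification \(\Sym^d V = H^0(\P V, Q^d)\), the element \(v_2\) (as a section of \(Q = V/O(-1)\)) is a local generator of the maximal ideal at \(b = [v_2]\): indeed \(v_2\) spans \(O(-1)_b\) and hence maps to zero in \(Q_b\), while \(v_1\) restricts to a unit there. A direct calculation in the chart \(\{[\alpha v_1 + v_2]\}\) confirms that \(v_2\) corresponds to the function \(-\alpha\), vanishing to order \(1\) at \(b\). Consequently \(v_1^{d-a} v_2^{a}\) vanishes to order exactly \(a\) at \(b\), and the vanishing filtration on \(\Sym^d V\) at \(b\) is the filtration by \(v_2\)-degree.

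Using this, I pick a basis \((s_0, \dots, s_r)\) of \(S\) adapted to the vanishing sequence. Starting with sections of \(S\) having the correct leading \(v_2\)-exponents and row-reducing against the monomial basis of \(\Sym^d V\), I can arrange
\[ s_i \;=\; v_1^{d-a_i} v_2^{a_i} \;+\; \sum_{\substack{j > a_i \\ j \notin \{a_0, \dots, a_r\}}} c_{ij}\, v_1^{d-j} v_2^{j}, \]
so that \(s_i\) has vanishing order exactly \(a_i\) at \(b\) and the higher-order corrections avoid the remaining leading indices. Applying \(\Sym^d m_t\), which sends \(v_1^{d-a} v_2^{a}\) to \(t^{a} w_1^{d-a} w_2^{a}\), and then rescaling the \(i\)-th generator by \(t^{-a_i}\) (which changes neither the span \(S_t\) nor its limit in the Grassmannian) produces
\[ t^{-a_i}\, m_t(s_i) \;=\; w_1^{d-a_i} w_2^{a_i} \;+\; \sum_{\substack{j > a_i \\ j \notin \{a_0, \dots, a_r\}}} c_{ij}\, t^{\,j - a_i}\, w_1^{d-j} w_2^{j}. \]
Every correction carries a strictly positive power of \(t\), so as \(t \to 0\) the \(i\)-th rescaled generator limits to \(w_1^{d-a_i} w_2^{a_i}\). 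The \(r+1\) limit vectors are distinct monomials, hence linearly independent, so the rescaled basis is regular at \(t = 0\); by properness of the Grassmannian and continuity of the induced morphism \(\A^1 \to \Gr(r+1, \Sym^d W)\), the subspace \(S_0 = \lim_{t \to 0} S_t\) equals the span of these limits, which is the claimed formula.

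I do not anticipate a serious obstacle: the argument is a single one-parameter subgroup computation. The only real pitfall is the vanishing-order convention induced by the identification \(\Sym^d V = H^0(\P V, Q^d)\), which reads vanishing at \(b = [v_2]\) as the \(v_2\)-exponent rather than the \(v_1\)-exponent; keeping this straight is what makes the powers of \(t\) line up so that the limit reconstructs \(w_1^{d-a_i} w_2^{a_i}\) (and not its complementary monomial).
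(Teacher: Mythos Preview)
Your proof is correct and follows essentially the same route as the paper: choose a basis of \(S\) adapted to the vanishing orders at \(b\), apply \(m_t\), rescale by \(t^{-a_i}\), and read off the monomial limits. You add a bit more care than the paper does---verifying the vanishing convention for \(Q^d\) and explicitly noting that the limit monomials are independent---but the argument is the same one-parameter computation.
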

\begin{proof}
  Let \(s_i \in S\) vanish to order \(a_i\) at \(b\).
  Then there exists a non-zero \(c_i \in \k\) and a homogeneous polynomial \(F_i(v_1,v_2)\) of degree \(d-a_i-1\) such that
  \[ s_i = c_i v_1^{d-a_i}v_2^{a_i} + v_2^{a_i+1}F_i(v_1,v_2). \]
  We see that
  \[ m_t(s_i) = c_i t^{a_i}w_1^{d-a_i}w_2^{a_i} + t^{a_i+1}w_2^{a_i+1}F_i(w_1,tw_2). \]
  Therefore, we get
  \[ \lim_{t \to 0} \langle m_t(s_i) \rangle = \langle  w_1^{d-a_i}w_2^{a_i}\rangle,\]
  and hence
  \[ \lim_{t \to 0} m_t(S) = \langle  w_1^{d-a_0}w_2^{a_0}, \dots, w_1^{d-a_r}w_2^{a_r}\rangle.\]
\end{proof}
\begin{corollary}\label{cor:gen}
  If \(b \in \P V\) is not a ramification point of \(S\), then \(S_0\) depends only on the subspace \(\im m_0 \subset W\).
\end{corollary}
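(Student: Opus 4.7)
The plan is to apply \Cref{prop:key} directly and then observe that the description simplifies dramatically when the vanishing sequence is the generic one. In the setup of \Cref{prop:key}, the point $b$ corresponds to $[v_2]$, and since $m_0 = \begin{pmatrix} 1 & 0 \\ 0 & 0 \end{pmatrix}$ we have $\im m_0 = \langle w_1\rangle \subset W$. Since $b$ is not a ramification point, the corollary to \Cref{prop:wronskian} gives that the vanishing sequence of $S$ at $b$ is the generic one: $a_i = i$ for $i = 0, \dots, r$. Substituting into \Cref{prop:key} yields
\[S_0 = \langle w_1^d,\ w_1^{d-1}w_2,\ \dots,\ w_1^{d-r}w_2^r\rangle.\]

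The key observation is that this subspace factors as $S_0 = w_1^{d-r} \cdot \Sym^r W$, since multiplying $w_1^{d-r}$ by the standard monomial basis $w_1^{r-i}w_2^i$ of $\Sym^r W$ recovers exactly the listed generators. Now $\Sym^r W$ is intrinsic to $W$, while $w_1^{d-r}$ is determined, up to a non-zero scalar, by the line $\langle w_1\rangle = \im m_0$. Hence $S_0$ depends only on $\im m_0$, as claimed. There is no real obstacle here: the content of the corollary is entirely captured by \Cref{prop:key} together with the fact that the generic vanishing sequence forces the limit subspace to be of the form $(\text{line})^{d-r} \cdot \Sym^r W$, which is manifestly a function of the line alone.
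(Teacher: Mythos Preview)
Your proof is correct and follows the same approach as the paper: apply \Cref{prop:key} with the generic vanishing sequence $a_i = i$, then give an intrinsic description of the resulting subspace depending only on the line $\langle w_1\rangle = \im m_0$. The paper phrases this intrinsic description as ``the subspace of sections vanishing to order at least $d-r$ at $[w_1] \in \P W$'', while you write it as $w_1^{d-r}\cdot \Sym^r W$; these are the same subspace, so the arguments are essentially identical.
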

\begin{proof}
  In this case, the vanishing sequence at \(b\) is \(0 < \dots < r\).
  From \Cref{prop:key}, we see that \(S_0 \subset \Sym^d W\) is the subspace of sections that vanish to order at least \(d-r\) at \(\im m_0 = [w_1] \in \P W\).
\end{proof}
\Cref{cor:gen} implies that the map \(\widetilde \phi\) is constant on the proper transforms of the lines \(\P V \times \{w\} \subset \Delta\) (see \Cref{fig:blowup}).

\begin{theorem}\label{thm:orbitclosure}
  Let \(S' \subset \Sym^d W\) be the image of \(S \subset \Sym^d V\) under an isomorphism \(V \to W\).
  The \(\GL W\)-orbit of \([S'] \in \Gr(r+1, \Sym^d W)\) contains finitely many orbits in its closure.
  In addition to the orbit of \([S']\), these are the orbits of \(\langle  w_1^{d-a_0}w_2^{a_0}, \dots, w_1^{d-a_r}w_2^{a_r}\rangle\) where \(a_0< \dots< a_r\) is the vanishing sequence of \(S\) at some \(p \in \P V\).
\end{theorem}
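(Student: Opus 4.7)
The plan is to use the complete parametrisation $\widetilde \phi \colon \widetilde \P^3 \to \Gr(r+1, \Sym^d W)$ from \Cref{thm:res}. First I would note that since $\widetilde \phi$ is proper and $\GL W$-equivariant, its image is a closed $\GL W$-invariant subvariety. On the open stratum $\P^3 \setminus \Delta$ of isomorphisms $V \to W$ the map agrees with $\phi$; since $\GL W$ acts transitively on this stratum by post-composition and $\phi(m) = m(S)$, the stratum maps surjectively onto $\Orb([S'])$. Properness then identifies the whole image of $\widetilde \phi$ with $\overline{\Orb([S'])}$, so the task reduces to listing the $\GL W$-orbits that appear in this image.

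Next I would stratify $\widetilde \P^3$ into $\GL W$-invariant pieces and compute the image of each. Besides the open stratum, the pieces are the (unblown-up) lines $L_b \cong \P W$ for $b \in \P V \setminus B$ and the exceptional divisors $E_b$ for $b \in B$. Each $L_b$ with $b \notin B$ is a single $\GL W$-orbit, and by \Cref{cor:gen} all of them map to the same orbit, namely that of $\langle w_1^d, w_1^{d-1} w_2, \ldots, w_1^{d-r} w_2^r \rangle$ (the subspace attached to the generic vanishing sequence $(0, 1, \ldots, r)$). Each $E_b$ with $b \in B$ is an irreducible surface; by \Cref{prop:key} at least one point of $E_b$ maps to $\langle w_1^{d-a_0(b)} w_2^{a_0(b)}, \ldots, w_1^{d-a_r(b)} w_2^{a_r(b)} \rangle$, so by equivariance and properness the image of $E_b$ equals the closure of the $\GL W$-orbit of this subspace.

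The main obstacle will be to check that these $2$-dimensional orbit closures contribute no orbits beyond those already listed. The boundary of each such closure is a union of $\GL W$-orbits of dimension at most one. The stabiliser of a $1$-dimensional orbit has dimension three, hence is (the preimage of) a Borel subgroup in $\PGL W$; a short weight calculation shows that the unique Borel-fixed $(r+1)$-dimensional subspace of $\Sym^d W$ is the initial monomial segment $\langle w_1^d, \ldots, w_1^{d-r} w_2^r \rangle$, so the only $1$-dimensional orbit is the orbit of this subspace, which is already on the list. Zero-dimensional orbits would correspond to $\SL W$-fixed $(r+1)$-subspaces, of which there are none because $\Sym^d W$ is an irreducible $\SL W$-representation. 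Putting everything together yields $\overline{\Orb([S'])}$ as the finite union of $\Orb([S'])$ with the orbits of $\langle w_1^{d-a_0} w_2^{a_0}, \ldots, w_1^{d-a_r} w_2^{a_r} \rangle$ ranging over the vanishing sequences realised by $S$, as asserted.
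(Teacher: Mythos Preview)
Your approach is correct and shares its opening move with the paper---identifying $\overline{\Orb([S'])}$ with the image of the resolved map $\widetilde\phi$---but then diverges in how the exceptional divisors are handled. The paper determines $\widetilde\phi$ explicitly at every point of $E_b$: the normal direction tangent to $\Delta$ goes (via \Cref{cor:gen}) to the generic monomial subspace, while every other normal direction is realised as the tangent of a family $m_t$ as in \Cref{prop:key} for a suitable choice of $v_1$ and $w_2$, and therefore goes to the special monomial subspace. You instead observe that the image of $E_b$ must be the closure of a single $\GL W$-orbit (by irreducibility and equivariance), identify that orbit using one value supplied by \Cref{prop:key}, and then classify the $\le 1$-dimensional $\GL W$-orbits in $\Gr(r+1,\Sym^dW)$ via Borel stabilisers and the irreducibility of $\Sym^dW$ to control its boundary. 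Your route is a bit tidier for the bare statement and yields the pleasant structural fact that the Grassmannian has a unique one-dimensional $\GL W$-orbit and no zero-dimensional ones; the paper's explicit pointwise description of $\widetilde\phi$ on $E_b$ has the advantage of feeding directly into the torus-localisation calculation in \Cref{sec:localisation}.

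One step deserves an extra sentence: your claim ``the image of $E_b$ equals the closure of the $\GL W$-orbit of this subspace'' presumes that the limit point of the family $m_t$ lies in the \emph{dense} $\GL W$-orbit of $E_b$, not in a smaller stratum. This is true---the stabiliser in $\GL W$ of a point of $L_b$ is a Borel, and it acts on the fibre $\P(N_{L_b/\P^3})\cong\P^1$ with a single closed orbit (the direction tangent to $\Delta$) and a dense complementary orbit, and the family $m_t$ approaches transversally to $\Delta$---but it is the crux of the argument and should be stated.
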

\begin{proof}
  The closure of the \(\GL W\)-orbit of \([S']\) is precisely the image of the morphism
  \[ \widetilde \phi \colon \widetilde \P \to \Gr(r+1, \Sym^d W).\]
  Let \(\widetilde \Delta \subset \widetilde \P\) be the pre-image of \(\Delta \subset \P^3\).
  Then \(\widetilde \P - \widetilde \Delta\) is a copy of \(\PGL_3\), and its image is the orbit of \([S']\).

  Consider \(\widetilde m \in \widetilde \Delta\) and let \(m \in \Delta\) be its image.
  Suppose \(m\) does not lie on any line \(L_b\) and let \(\im m = [w_1] \in \P W\).
  Then by \Cref{cor:gen}, the image \(\widetilde \phi(\widetilde m) \in \Gr(r+1, \Sym^d W)\) corresponds to
  \begin{equation}\label{eq:imgen}
    \langle w_1^d w_2^{0}, \dots, w_1^{d-r}w_2^{r}\rangle,
  \end{equation}
  where \(w_2 \in W\) is any vector linearly independent from \(w_1\).

  Suppose \(m\) lies on \(L_b\).
  Let \(w = \im m \in \P W\), so that \(m \in \Delta\) corresponds to \((b,w)\) in the isomorphism \eqref{eq:delta}.
  Suppose \(v_2 \in V\) is such that \(b = [v_2]\) and \(w_1 \in W\) such that \(w = [w_1]\).
  The point \(\widetilde m\) corresponds to a direction in the normal bundle \(N_{L_b/\P^3}\) at \(p\).
  Consider the direction in \(N_{L_b/\P^3}|_p\) corresponding to the line \(\P V \times \{w\} \subset \Delta\) through \(m\).
  By \Cref{cor:gen}, the map \(\widetilde \phi\) is constant on this line with value given by the subspace \eqref{eq:imgen}.
  If \(\widetilde m\) corresponds to this direction, then \(\widetilde \phi(\widetilde m)\) corresponds to the same subspace.
  All the other directions in \(N_{L_b/\P^3}|_p\) can be realised by tangent directions of families \(m_t\) for suitable choices of \(v_1 \in V\) and \(w_2 \in W\).
  If \(\widetilde m\) corresponds to the tangent direction given by \(m_t\), then \Cref{prop:key} implies that \(\widetilde \phi(\widetilde m)\) corresponds to the subspace
  \begin{equation}\label{eq:imspe}
    \langle  w_1^{d-a_0}w_2^{a_0}, \dots, w_1^{d-a_r}w_2^{a_r}\rangle.
  \end{equation}

  In conclusion, we see that the image of every point of \(\widetilde \Delta\) is of the form \eqref{eq:imgen}, which corresponds to the generic vanishing sequence, or \eqref{eq:imspe}, which corresponds to the vanishing sequence of a ramification point.
\end{proof}

\section{Localisation}\label{sec:localisation}
The goal of this section is to compute the equivariant orbit class of a linear series using localisation on \(\widetilde{\P}^{3}\).
The calculation is similar to the one in \cite[Appendix~B]{lee.pat.tse:19}.

Choose a basis \(\langle  w_{1}, w_{2} \rangle\) of \(W\) and let \(T \subset \GL(W)\) be the diagonal torus with respect to this basis.
Choose coordinates \(t_{1}, t_{2}\) on \(T\) so that the action on \(W\) is
\[ (t_{1},t_{2}) \colon (w_{1}, w_{2}) \mapsto (t_{1}w_{1}, t_{2}w_{2}).\]
Then the \(T\)-fixed locus of \(\P^{3} = \P \Hom(V,W)\) consists of two skew lines \(\Lambda_{1}\) and \(\Lambda_{2}\) given by
\[ \Lambda_i = \P \Hom(V, \langle w_{i}\rangle).\]
Note that the lines \(\Lambda_{i} \subset \P^{3}\) lie in the discriminant quadric \(\Delta\), and are in the opposite ruling as the lines \(L_{b}\) blown up to get \(\widetilde{\P}^{3}\).
The \(T\)-fixed locus of \(\widetilde{\P}^{3}\) consists of the proper transforms \(\widetilde{\Lambda}_{i}\) of \(\Lambda_{i}\) and points \(p_{i,b}\) for \(i = 1,2\) and \(b \in B\).
The point \(p_{i,b}\) maps under the blow-down to the point \(\Lambda_{i} \cap L_{b}\).
We now describe it in local coordinates.

Choose a basis \(v_{1}, v_{2}\) of \(V\) such that \([v_{2}] = b \in \P V\).
In the bases \(v_{1}, v_{2}\) of \(V\) and \(w_{1}, w_{2}\) of \(W\), we represent \(m \in \Hom(V, W)\) by the matrix
\[
  m = \begin{pmatrix}
    X_{11} & X_{12} \\
    X_{21} & X_{22}
  \end{pmatrix}.
\]
The matrix entries \(X_{ij}\) are functionals on \(\Hom(V, W)\).
The torus \(T\)-acts by
\[ (t_{1},t_{2}) \colon X_{ij} \mapsto t_{i}^{-1}X_{ij}.\]
The line \(\Lambda_{1} \subset \P^{3}\) is defined by \(X_{21} = X_{22} = 0\) and the line \(L_b \subset \P^3\) by \(X_{12} = X_{22} = 0.\)
Their intersection is the point \(X_{12} = X_{21} = X_{22} = 0\).
Choose affine coordinates \(x_{ij} = X_{ij}/X_{11}\) around this point.
Then the blow-up of \(\P^3\) in \(L_{b}\) has the coordinate charts
\[ (x_{21},x_{12},x_{22},u \mid x_{12}=ux_{22}) \text{ and } (x_{21},x_{12},x_{22}, v \mid x_{22}=vx_{12}).\]
The point \(p_{1,b}\) is contained in the second chart, and is given by
\begin{equation}\label{eq:p}
  p_{1,b}: x_{21} = x_{22} = v = 0.
\end{equation}

Given \((m_{1},m_{2}) \in \mathbf{Z}\), let \(\chi(m_1,m_2)\) be the character of \(T\) given by
\[ \chi(m_1,m_2) \colon (t_1,t_2) \mapsto t_{1}^{m_1} \cdot t_2^{m_2}.\]
The next two propositions give the normal bundles of the \(T\)-fixed loci.

\begin{proposition}\label{prop:normalp}
  Let \(N_{1,b}\) be the tangent space to \(p_{1,b} \in \widetilde{\P}^3\).
  As a \(T\)-representation, we have
  \[ N_{1,b} \cong \chi(-1,1) \oplus \chi(-1,1) \oplus \chi(1,-1). \]
\end{proposition}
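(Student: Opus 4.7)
The plan is to verify the weight decomposition by a direct computation in local affine coordinates at the fixed point $p_{1,b}$. Since $p_{1,b}$ is an isolated $T$-fixed closed point of $\widetilde{\P}^3$, its tangent space $N_{1,b}$ is a three-dimensional $T$-representation that splits as a direct sum of three one-dimensional weight spaces, one for each coordinate direction. The task therefore reduces to computing the $T$-weight of each local parameter at $p_{1,b}$ and negating.

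First I would record the weights of the affine coordinates $x_{ij}$ on $\P^3$. From the action $(t_1, t_2) \colon X_{ij} \mapsto t_i^{-1} X_{ij}$, the ratios $x_{ij} = X_{ij}/X_{11}$ have weight $\chi(0,0)$ for $x_{12}$ and weight $\chi(1,-1)$ for both $x_{21}$ and $x_{22}$. For the blow-up chart near $p_{1,b}$, the extra coordinate is a ratio of the two defining functions of $L_{b}$: using $x_{12} = u\, x_{22}$ gives $u$ weight $\chi(-1,1)$, while $x_{22} = v\, x_{12}$ gives $v$ weight $\chi(1,-1)$.

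The remaining step is to identify which three coordinate directions span $T_{p_{1,b}}\widetilde{\P}^3$. Since $p_{1,b}$ is an isolated $T$-fixed point in the exceptional $\P^1$ over $\Lambda_1 \cap L_b$, it cannot lie on the fixed line $\widetilde{\Lambda}_1$; it must be the point of this $\P^1$ corresponding to the normal direction complementary to $T\Lambda_1$ inside $N_{L_b/\P^3}$. In the chart system above, this places $p_{1,b}$ at the origin of the chart with local parameters $x_{21}, x_{22}, u$. Negating the weights of these three parameters yields tangent weights $\chi(-1,1), \chi(-1,1), \chi(1,-1)$, which is the claimed decomposition. The only subtle point is this last identification, distinguishing $p_{1,b}$ from the other $T$-fixed point of the exceptional $\P^1$ (the one lying on $\widetilde{\Lambda}_1$); once that is pinned down, the rest is routine bookkeeping of weights.
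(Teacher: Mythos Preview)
Your argument is correct and follows the same approach as the paper: compute the \(T\)-weights of the local affine coordinates at \(p_{1,b}\) and dualise. You place \(p_{1,b}\) at the origin of the chart with parameters \((x_{21},x_{22},u)\), whereas the paper's text names the other chart and the variable \(v\); your identification is the internally consistent one (with the stated relation \(x_{12}=ux_{22}\), the coordinate \(u=x_{12}/x_{22}\) has weight \(\chi(-1,1)\), matching the weight the paper assigns to its ``\(v\)''), so this is only a labelling discrepancy and the computation is identical.
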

\begin{proof}
  From \eqref{eq:p}, we see that the co-tangent space at \(p_{1,b}\) is generated by \(x_{21}, x_{22}, v\), on which \(T\) acts by
  \[ (t_1,t_2) \colon x_{21} \mapsto t_{1}t_2^{-1} x_{21} \quad x_{22} \mapsto t_1t_{2}^{-1}x_{22} \quad v \mapsto t_1^{-1}t_2v.\]
We now take the dual.
\end{proof}

\begin{proposition}\label{prop:normall}
  Let \(\widetilde N_{1}\) be the normal bundle of \(\widetilde \Lambda_1 \subset \widetilde{\P}^3\).
  In the \(T\)-equivariant Grothendieck group of \(\widetilde{\Lambda}_1 = \P^1\) (with the trivial \(T\)-action), we have
  \[ [N_1] = \left([O] + [O\left(2-|B|\right)]\right) \otimes \chi(-1,1).  \]
\end{proposition}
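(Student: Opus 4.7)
My plan is to compute $\widetilde{N}_1$ via the short exact sequence of normal bundles
\[
  0 \to N_{\widetilde{\Lambda}_1/\widetilde{\Delta}} \to \widetilde{N}_1 \to N_{\widetilde{\Delta}/\widetilde{\P}^3}\big|_{\widetilde{\Lambda}_1} \to 0
\]
associated to the chain of inclusions $\widetilde{\Lambda}_1 \subset \widetilde{\Delta} \subset \widetilde{\P}^3$, where $\widetilde{\Delta}$ is the proper transform of the discriminant quadric $\Delta$. The strategy is to identify both outer terms as explicit equivariant line bundles on $\widetilde{\Lambda}_1 \cong \P^1$ and then sum their classes in the Grothendieck group, where the extension class is irrelevant.

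First I would note that since each $L_b$ is a smooth divisor in the smooth quadric $\Delta$, the maps $\widetilde{\Delta} \to \Delta$ and $\widetilde{\Lambda}_1 \to \Lambda_1$ are both isomorphisms (blowing up a Cartier divisor, respectively a point of a smooth curve, has no effect). Under \eqref{eq:delta}, $\Lambda_1$ corresponds to $\P V \times \{[w_1]\}$ inside $\P V \times \P W$, so $N_{\widetilde{\Lambda}_1/\widetilde{\Delta}}$ is the constant bundle $T_{[w_1]}\P W = \langle w_1\rangle^\vee \otimes \langle w_2\rangle$, which has degree zero and $T$-weight $\chi(-1,1)$. For the quotient, I would use the proper-transform identity $O_{\widetilde{\P}^3}(\widetilde{\Delta}) = \beta^* O_{\P^3}(\Delta) \otimes O(-E)$, where $E = \sum_{b \in B} E_b$, and restrict to $\widetilde{\Lambda}_1$. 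The first factor pulls back to $O(\Delta)|_{\Lambda_1}$, a degree-$2$ line bundle whose equivariant structure is pinned down by the fact that the determinant section of $O(2)$ has $T$-weight $\chi(-1,-1)$, so that the fiber weight of $O(\Delta)|_{\Lambda_1}$ is $\chi(-1,1)$. Each $E_b$ meets $\widetilde{\Lambda}_1$ transversally in the single point $q_b$, at which the local defining function of $E_b$ has trivial $T$-weight, so $O(E)|_{\widetilde{\Lambda}_1}$ has degree $|B|$ and trivial character. Combining, $N_{\widetilde{\Delta}/\widetilde{\P}^3}|_{\widetilde{\Lambda}_1}$ is the line bundle of degree $2-|B|$ with $T$-weight $\chi(-1,1)$.

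Adding the two outer K-theory classes yields the desired formula $[\widetilde{N}_1] = ([O] + [O(2-|B|)]) \otimes \chi(-1,1)$. The main bookkeeping obstacle is tracking the $T$-characters through the blow-up formula, in particular separating the weight $\chi(-1,1)$ contributed by the equivariant structure on $O(\Delta)$ from the trivial weight carried by $O(E)|_{\widetilde{\Lambda}_1}$ (this latter triviality uses that the cotangent direction $x_{12}$ at $q_b$ has weight zero, consistent with $\widetilde{\Lambda}_1$ being pointwise $T$-fixed). Once this is handled, the degree data is determined by the intersection numbers $\beta^*O(\Delta) \cdot \widetilde{\Lambda}_1 = 2$ and $E_b \cdot \widetilde{\Lambda}_1 = 1$.
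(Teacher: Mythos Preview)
Your proof is correct but takes a different route from the paper's. The paper computes the normal bundle \(N_1 = N_{\Lambda_1/\P^3}\) before blowing up, reading off \(N_1 \cong (O(1)\oplus O(1))\otimes\chi(-1,1)\) directly from the two defining linear forms \(X_{21},X_{22}\), and then passes to \(\widetilde N_1\) via the elementary-modification sequence
\[
  0 \to \widetilde N_1 \to \beta^* N_1 \to O_{\supp B}\otimes\chi(-1,1) \to 0.
\]
You instead filter by the proper transform of the quadric, using \(\widetilde\Lambda_1 \subset \widetilde\Delta \subset \widetilde{\P}^3\): the inner piece \(N_{\widetilde\Lambda_1/\widetilde\Delta}\) is read off from the product structure \(\Delta\cong\P V\times\P W\), and the outer piece comes from the blow-up formula \(O(\widetilde\Delta)=\beta^*O(\Delta)\otimes O(-E)\). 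Both arguments are short; the paper's avoids invoking \(\Delta\) at all, while yours makes the geometry of the two rulings do more of the work. Note that the two approaches actually produce different (but equal) K-theory expressions: the paper's gives \(2[O(1)]-|B|\,[O_{\rm pt}]\), yours gives \([O]+[O(2-|B|)]\). One small point: your sentence deducing the fiber weight \(\chi(-1,1)\) of \(O(\Delta)|_{\Lambda_1}\) from the weight \(\chi(-1,-1)\) of the determinant section is compressed---it implicitly uses that \(O_{\P^3}(2)|_{\Lambda_1}\cong O(2)\otimes\chi(-2,0)\), so that twisting by \(\chi(1,1)\) (to make \(\det\) invariant) yields \(\chi(-1,1)\). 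Making that step explicit would remove any ambiguity.
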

\begin{proof}
  Let \(N_{1}\) be the normal bundle of \(\Lambda_1 \subset \P^3\).
  The defining equations show that \(\Lambda_1\) is the zero locus of sections of \(O_{\P^3}(1) \otimes \chi(0,1)\) and the restriction of \(O_{\P^3}(1)\) to \(\Lambda_1\) is \(O(1) \otimes \chi(-1,0)\).
  Therefore, we get
  \[N_1 \cong \left( O(1) \oplus O(1) \right) \otimes \chi(-1,1).\]
  Recall that \(\beta \colon \widetilde \P^3 \to \P^3\) is the blow-up along \(\bigsqcup_b L_b\).
  We have the exact sequence
  \[ 0 \to \widetilde N_1 \to \beta^{*} N_1 \to Q \to 0,\]
  where \(Q \cong O_{\supp B} \otimes \chi(-1,1)\).
  The statement follows.
\end{proof}

Let \(\widetilde S\) be the pull-back to \(\widetilde \P^3\) of the universal sub-bundle of \(\Gr(r+1, \Sym^d W)\).
The following two propositions describe \(\widetilde S\) on the \(T\)-fixed locus.
\begin{proposition}\label{prop:sp}
  Let \(p = p_{1,b}\) and let \(a_0 < \dots < a_r\) be the vanishing sequence of \(S\) at \(b\).
  As a \(T\)-representation, we have
  \[ \widetilde S|_p \cong \bigoplus_{i=0}^{r} \chi(d-a_i, a_i).\]
\end{proposition}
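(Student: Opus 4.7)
The plan is to identify \(\widetilde\phi(p_{1,b}) \in \Gr(r+1, \Sym^d W)\) explicitly as a \(T\)-fixed subspace of \(\Sym^d W\) and then read off the weights of its basis vectors. Since \(\widetilde\phi\) is regular by \Cref{thm:res} and \(T\)-equivariant, the image of a \(T\)-fixed point is \(T\)-fixed in the Grassmannian, hence spanned by monomials in \(w_1, w_2\); the issue is to find \emph{which} monomials.

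First, I would show that \(p_{1,b}\) is the limit in \(\widetilde\P^3\) of the one-parameter family \(m_t\) from \eqref{eq:mt}. Choose a basis \(v_1, v_2\) of \(V\) with \([v_2] = b\). For \(t \neq 0\) the matrix \(m_t\) is invertible, hence disjoint from all blown-up lines, and lifts uniquely to \(\widetilde\P^3\). At \(t = 0\) we have \(m_0 \in \Lambda_1 \cap L_b\), while the derivative \(\dot m_0\) points in the \(X_{22}\)-direction. A direct computation in the blow-up charts described just before \eqref{eq:p} identifies this tangent direction with the unique isolated \(T\)-fixed point in the fibre over \(\Lambda_1 \cap L_b\); by \Cref{prop:normalp} that isolated fixed point is \(p_{1,b}\) (note the tangent weights \(\chi(-1,1)^{\oplus 2} \oplus \chi(1,-1)\) are all non-trivial, so \(p_{1,b}\) is not on the fixed curve \(\widetilde\Lambda_1\)). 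Hence the lift of \(m_t\) passes through \(p_{1,b}\) at \(t = 0\).

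Since \(\widetilde\phi\) is regular by \Cref{thm:res}, continuity now gives
\[
\widetilde\phi(p_{1,b}) \;=\; \lim_{t \to 0} [m_t(S)],
\]
and \Cref{prop:key} evaluates the right-hand side to
\[
\widetilde\phi(p_{1,b}) \;=\; \langle w_1^{d-a_0}w_2^{a_0}, \dots, w_1^{d-a_r}w_2^{a_r}\rangle \subset \Sym^d W,
\]
where \(a_0 < \dots < a_r\) is the vanishing sequence of \(S\) at \(b\). Since \(T\) acts on \(W\) by \((t_1, t_2)\cdot(w_1, w_2) = (t_1 w_1, t_2 w_2)\), the monomial \(w_1^{d-a_i}w_2^{a_i}\) is a \(T\)-eigenvector of weight \(\chi(d-a_i, a_i)\); the \(a_i\) are distinct, so these weights are too, and \(\widetilde S|_{p_{1,b}}\) decomposes as the direct sum in the statement.

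The main obstacle is the first step: the local-coordinate check that the tangent direction of \(m_t\) at \(t = 0\) corresponds to \(p_{1,b}\) rather than to the other \(T\)-fixed point \(\widetilde\Lambda_1 \cap E_b\) in the exceptional fibre. Once that matching is settled, the result follows at once from \Cref{prop:key} and the definition of the \(T\)-action on \(W\).
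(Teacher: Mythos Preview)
Your proposal is correct and follows the same approach as the paper: identify \(p_{1,b}\) as \(\lim_{t\to 0} m_t\) in \(\widetilde\P^3\), then invoke \Cref{prop:key} to read off the monomial basis and hence the \(T\)-weights. The paper simply asserts the limit identity \(\lim_{t\to 0} m_t = p_{1,b}\), whereas you spell out the local-coordinate verification; that extra detail is fine and the direct check in the blow-up chart (the curve \(m_t\) has \(x_{21}=x_{12}=0\), \(x_{22}=t\), hence \(u = x_{12}/x_{22} = 0\) and the limit is the origin of the chart containing \(p_{1,b}\)) is routine.
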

\begin{proof}
  Consider the family \(m_t\) as in \eqref{eq:mt}, where \([v_2] = b\).
  Then, in \(\widetilde \P\) we have
  \[\lim_{t \to 0} m_t = p.\]
  Therefore, we have
  \[ \widetilde S|_p = \lim_{t \to 0} m_t(S).\]
  \Cref{prop:key} gives the limit and yields
  \[ \widetilde S|_p = \langle  w_1^{d-a_0}w_2^{a_0}, \dots, w_1^{d-a_r}w_2^{a_r}\rangle.\]
  The statement follows.
\end{proof}

\begin{proposition}\label{prop:sl}
  As a \(T\)-equivariant sheaf on \(\widetilde \Lambda_1\), we have
  \[ [\widetilde S]|_{\widetilde{\Lambda}_{1}} = \bigoplus_{i=0}^{r}\chi(d-i,i).\]
\end{proposition}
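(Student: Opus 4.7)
The plan is to show that the morphism $\widetilde{\phi}$ is constant when restricted to $\widetilde{\Lambda}_1$, which forces $\widetilde{S}|_{\widetilde{\Lambda}_1}$ to be a trivial bundle with fiber a specific $T$-subrepresentation of $\Sym^d W$. Reading off the $T$-characters of this representation then gives the claimed decomposition.

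First I would verify the structure of $\widetilde{\Lambda}_1$. Note that $\Lambda_1 = \P \Hom(V, \langle w_1 \rangle)$, and the $T$-action on $\Hom(V, \langle w_1 \rangle) \cong V^* \otimes \langle w_1 \rangle$ is through the single character $\chi(1,0)$, which becomes trivial on the projectivisation. Because $\Lambda_1$ is a smooth curve meeting each $L_b$ transversally at a single point, its proper transform $\widetilde{\Lambda}_1$ is isomorphic to $\Lambda_1 \cong \P^1$, and $T$ still acts trivially on it. Hence any $T$-equivariant coherent sheaf on $\widetilde{\Lambda}_1$ decomposes in the equivariant Grothendieck group as a sum of ordinary sheaf classes weighted by characters.

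Next I would compute $\widetilde{\phi}$ on the dense open subset $U = \Lambda_1 \setminus \bigsqcup_b L_b \subset \widetilde{\Lambda}_1$. For $m \in U$, the kernel $\ker m \in \P V$ is not a ramification point of $S$ and $\im m = [w_1]$. \Cref{cor:gen} then yields
\[
\widetilde{\phi}(m) = \langle w_1^d, w_1^{d-1}w_2, \dots, w_1^{d-r}w_2^r \rangle.
\]
A morphism from $\P^1$ to a separated scheme that is constant on a dense open subset is constant everywhere, so $\widetilde{\phi}|_{\widetilde{\Lambda}_1}$ is the constant morphism with this value.

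It follows that $\widetilde{S}|_{\widetilde{\Lambda}_1}$ is the pullback of the universal sub-bundle along a constant map, hence the trivial rank $(r+1)$ bundle with fiber $\langle w_1^{d-i} w_2^i \mid 0 \leq i \leq r \rangle \subset \Sym^d W$. Each basis vector $w_1^{d-i} w_2^i$ transforms under $T$ by the character $\chi(d-i, i)$, giving $[\widetilde{S}]|_{\widetilde{\Lambda}_1} = \bigoplus_{i=0}^r \chi(d-i, i)$ in the equivariant Grothendieck group. The one step requiring care is the application of \Cref{cor:gen}, which is valid precisely because the lines $L_b$ that have been blown up correspond exactly to the ramification points of $S$, so a generic $m \in \Lambda_1$ has $\ker m$ avoiding $B$.
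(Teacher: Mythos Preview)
Your proposal is correct and follows essentially the same approach as the paper: both use \Cref{cor:gen} to show that $\widetilde\phi$ is constant on $\widetilde\Lambda_1$ with value $\langle w_1^d, \dots, w_1^{d-r}w_2^r\rangle$, and then read off the $T$-characters of this fiber. Your version is simply more explicit about why constancy on the dense open extends to all of $\widetilde\Lambda_1$, and about why the $T$-action on $\widetilde\Lambda_1$ is trivial.
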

\begin{proof}
  By \Cref{cor:gen}, the map \(\widetilde \phi\) is constant on \(\widetilde \Lambda_1\), and its value corresponds to the subspace
  \[ \langle  w_1^dw_2^0, \dots, w_1^{d-r}w_2^r \rangle \subset \Sym^d W. \]
  The statement follows.
\end{proof}

We are now ready to prove the main theorem of the paper, which we restate.
\begin{theorem}\label{thm:realmain}
  Let \(s \in \Hom(\k^{r+1}, \Sym^d  \k^2)\) be an injective map with image \(S \subset \Sym^d\k^2\).
  Let \(\{(a_{i}(b)) \mid b \in B\}\) be the ramification profile of \(S\).
  Let \(\Gamma \subset  \PGL(2)\) be the stabiliser of \(S \subset \Sym^d\k^2\), and assume that it is finite.
  Let \(\Orb(s)\) be the closure of the \(\GL(r+1) \times \GL(2)\) orbit of \(s\).
  Then the class
  \[ |\Gamma| \cdot [\Orb(s)] \in A^{(r+1)(d-r)-3}_{\GL(r+1) \times \GL(2)}(\Hom(\k^{r+1}, \Sym^d \k^2)) \]
  is given by
  \[|\Gamma| \cdot [\Orb(s)] = \frac{1}{(\omega_1-\omega_2)^3}\sum_{b \in B}   \psi_{a_0(b), \dots, a_r(b)}(\mu;\omega) + \frac{2-|B|}{(\omega_1-\omega_2)^3} \cdot \psi_{0,\dots,r}(\mu;\omega).\]
\end{theorem}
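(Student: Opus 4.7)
The plan is to compute $|\Gamma|[\Orb(s)]$ as a push-forward from the complete parametrisation $\widetilde \phi \colon \widetilde \P^3 \to \Gr(r+1, \Sym^d W)$ constructed in Section~\ref{sec:res}, and then to evaluate that integral by equivariant localisation with respect to the maximal torus $T \subset \GL W$ chosen in Section~\ref{sec:localisation}. A generic point of $\widetilde \P^3$ is an isomorphism $V \to W$ on which $\PGL W$ acts freely, so $\widetilde \phi$ is a complete parametrisation of the $\PGL W$-orbit of $[S]$; applying Proposition~\ref{prop:intgr} with $H = \GL W$ and $\overline H = \PGL W$ reduces the theorem to the evaluation of
\[
  \int_{\widetilde \P^3} c_{\rm top}\bigl(\Hom(\underline V, \widetilde \phi^* Q)\bigr),
\]
where $Q$ is the universal quotient bundle on the Grassmannian.

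The next step is the Atiyah--Bott localisation formula on $\widetilde \P^3$. The $T$-fixed locus is already identified in Section~\ref{sec:localisation} as the isolated points $p_{1,b}, p_{2,b}$ for $b \in B$ together with the two disjoint lines $\widetilde \Lambda_1, \widetilde \Lambda_2$, and Propositions~\ref{prop:normalp}--\ref{prop:sl} supply all of the required equivariant data. At $p_{1,b}$, Proposition~\ref{prop:sp} gives the Chern roots of $\widetilde \phi^* Q$, so $c_{\rm top}\Hom(\underline V, \widetilde \phi^* Q)|_{p_{1,b}} = \phi_{a_0(b),\dots,a_r(b)}(\mu;\omega_1,\omega_2)$, while Proposition~\ref{prop:normalp} yields the Euler class of the tangent space as $(\omega_1-\omega_2)^3$. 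By the $\omega_1 \leftrightarrow \omega_2$ symmetry, the contribution of $p_{2,b}$ is $\phi_{a_0(b),\dots,a_r(b)}(\mu;\omega_2,\omega_1)$ divided by $-(\omega_1-\omega_2)^3$. Summing over all $b \in B$ yields the first term $\sum_b \psi_{a_0(b),\dots,a_r(b)}(\mu;\omega)/(\omega_1-\omega_2)^3$ of the theorem.

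For the line $\widetilde \Lambda_1 \cong \P^1$, Corollary~\ref{cor:gen} shows that $\widetilde \phi$ is constant along $\widetilde \Lambda_1$, so $c_{\rm top}\Hom(\underline V, \widetilde \phi^* Q)$ restricts to the constant $\phi_{0,\dots,r}(\mu;\omega_1,\omega_2)$, and Proposition~\ref{prop:normall} gives the equivariant Euler class of the normal bundle as $(\omega_2-\omega_1)\bigl((\omega_2-\omega_1)+(2-|B|)h\bigr)$, where $h$ is the hyperplane class on $\widetilde \Lambda_1$. Expanding the reciprocal as a geometric series in $h$ (only the $h^0$ and $h^1$ terms survive on $\P^1$) and extracting the coefficient of $h$ produces $(2-|B|)\phi_{0,\dots,r}(\mu;\omega_1,\omega_2)/(\omega_1-\omega_2)^3$; the analogous contribution from $\widetilde \Lambda_2$ combines with it to give the second term $(2-|B|)\psi_{0,\dots,r}(\mu;\omega)/(\omega_1-\omega_2)^3$. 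Adding the point and line contributions gives the theorem.

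The main obstacle is not conceptual but combinatorial: tracking the signs arising from the paired fixed points $p_{1,b}, p_{2,b}$ (and from the paired fixed lines $\widetilde \Lambda_i$) so that the individually symmetric contributions fit together as the antisymmetric polynomials $\psi$, and correctly extracting the coefficient $2-|B|$ from the twist appearing in Proposition~\ref{prop:normall}. The final expression is manifestly a polynomial because the localisation sum is, but this is not at all evident from the closed formula.
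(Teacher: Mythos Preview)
Your proposal is correct and follows essentially the same route as the paper: reduce to the integral on \(\widetilde{\P}^3\) via Proposition~\ref{prop:intgr}, apply torus localisation, and read off the contributions of the isolated fixed points \(p_{i,b}\) and the fixed lines \(\widetilde\Lambda_i\) from Propositions~\ref{prop:normalp}--\ref{prop:sl}. The only cosmetic differences are that the paper cites Proposition~\ref{prop:sl} rather than Corollary~\ref{cor:gen} for the restriction to \(\widetilde\Lambda_1\) (they encode the same fact), and that Proposition~\ref{prop:sp} literally gives \(\widetilde S|_p\) rather than \(\widetilde\phi^*Q|_p\), so the Chern roots of the quotient are the complementary weights---which is exactly how \(\phi_{a_0,\dots,a_r}\) is defined.
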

\begin{proof}
  Set \(V = W = \k^2\) and consider the rational map
  \[ \P^3 = \P\Hom(V,W) \dashrightarrow \Gr(r+1, \Sym^d W)\]
  given by
  \[ m \mapsto m(S).\]
  Let \(\phi \colon \widetilde{\P}^3 \to \Gr(r+1, \Sym^dW)\) be its resolution provided by \Cref{thm:res}.
  Then \(\phi\) is a complete parametrisation of the \(\PGL W\)-orbit of \([S]\).
  By \Cref{prop:intgr}, we have
  \[ |\Gamma| \cdot [\Orb(s)] = \int_{\widetilde{\P}^3}c_{(r+1)(d-r)}\Hom(\underline \k^{r+1}, \phi^*Q),\]
  where \(Q\) is the universal quotient bundle on the Grassmannian.
  In our case, we have
  \[ \phi^* Q = \Sym^d \underline W / \widetilde{S}.\]
  Let \(T \subset \GL(2)\) be a maximal torus.
  By the equivariant localisation formula \cite[Corollary~1]{edi.gra:98*1}, we have
  \begin{equation}\label{eq:loc}
    \int_{\widetilde{\P}^3}c_{(r+1)(d-r)}\Hom(\underline \k^{r+1}, \phi^*Q) = \sum_F \int_F \frac{c_{(r+1)(d-r)}\Hom(\underline \k^{r+1}, \phi^*Q)}{e(N_F)},
  \end{equation}
  where the sum is over the \(T\)-fixed loci \(F\) and \(e(N_F)\) is the top Chern class of the equivariant normal bundle of \(F \subset \widetilde{\P}^3\).
  In our case, the fixed loci consist of the lines \(\widetilde \Lambda_i\) for \(i = 1, 2\) and the points \(p_{i,b}\) for \(i = 1,2\) and \(b \in B\).

  Let us compute the contribution from \(F = \widetilde\Lambda_1\).
  \Cref{prop:normall} give us the numerator of the integrand:
  \[
    \prod_{j = 0}^r \prod_{i = r+1}^d ((d-i)\omega_1 + i \omega_2 - \mu_j) = \phi_{0,\dots,r}(\mu;\omega).
  \]
  \Cref{prop:sl} gives us the denominator of the integrand:
  \[
    (\omega_2-\omega_1)(\omega_2-\omega_1+ (2-|B|)h),
  \]
  where \(h = c_1(O(1))\).
  The integral is the coefficient of \(h\) in the power series expansion of the quotient.
  We may compute it by formally differentiating with respect to \(h\) and setting \(h = 0\).
  The result is
  \begin{equation}\label{eq:intl}
    \int_{\widetilde{\Lambda}_1} \frac{c_{(r+1)(d-r)}\Hom(\k^{r+1},\phi^*Q)}{e(N_{\widetilde{\Lambda}_1})} =  \frac{2-|B|}{(\omega_1-\omega_2)^3} \cdot \phi_{0, \dots, r}(\mu;\omega).
  \end{equation}
  The contribution from \(F = \widetilde\Lambda_2\) is the same as above with \(\omega_1\) and \(\omega_2\) swapped.

Let us compute the contribution from \(F = p_{1,b}\).
\Cref{prop:sp} gives us the numerator:
\[ \prod_{j = 0}^{r} \prod_{i}((d-i)\omega_1 + i\omega_2 - \mu_j) = \phi_{a_0(b), \dots, a_r(b)}(\mu;\omega).\]
On the left hand side above, the index \(i\) ranges in the set \(\{0,\dots, d\} - \{a_0(b), \dots, a_r(b)\}\).
\Cref{prop:normalp} gives us the denominator:
\[ (\omega_1-\omega_2)^3.\]
Taking the integral is trivial in this case, so the result is
\begin{equation}\label{eq:intp}
\int_{p_{1,b}} \frac{c_{(r+1)(d-r)}\Hom(\k^{r+1},\phi^*Q)}{e(N_{p_{1,b}})} =  \frac{1}{(\omega_1-\omega_2)^3} \cdot \phi_{a_0(b), \dots, a_r(b)}(\mu;\omega).
\end{equation}
The contribution from \(F = p_{2,b}\) is the same as above with \(\omega_1\) and \(\omega_2\) swapped.

By combining \eqref{eq:intl}, \eqref{eq:intp}, and their analogues for \(\widetilde \Lambda_2\) and \(p_{2,b}\) we see that the integral in \eqref{eq:loc} is as claimed.
\end{proof}

\section{Orbit specialisation}\label{sec:spec}
The goal of this section is to understand limits of the orbit closure of a linear series under one-parameter specialisations.
Recall that \(\Orb\) denotes the orbit closure and \(\WOrb\) the weighted orbit closure, namely the cycle obtained by multiplying the orbit closure by the order of the stabiliser group in \(\PGL(2)\).

Fix a positive integer \(m\) and for each \(j = 1, \dots, m\), fix an increasing sequence of non-negative integers \(a_0(j)< \dots < a_r(j)\).
Assume that 
\begin{equation}\label{eqn:w}
  \sum_{j,i}(a_i(j) - i) = (r+1)(d-r).
\end{equation}
We consider one-parameter degenerations of linear series on \(\P^1\) with ramification profile \(\{(a_0(j), \dots, a_r(j)) \mid j = 1, \dots, m\}\).
We use limit linear series, for which we refer the reader to \cite{eis.har:86}.
Although \cite{eis.har:86} assumes characteristic 0, the theory holds without modification for characteristic \(> d\); see \cite{oss:06}.

Let \(\Delta\) be the spectrum of a DVR with special point \(0\) and generic point \(\eta\).
Let \(p_{1,\eta}, \dots, p_{m,\eta}\) be distinct sections \(\eta \to \P^1_\eta\) and let \((S_\eta, O(d))\) be a linear series that has vanishing sequence \(a_0(j)< \dots < a_r(j)\) at \(p_{j,\eta}\).
Let \((\mathcal X \to \Delta, p_1, \dots, p_m)\) be a semi-stable extension of \((\P^1_\eta \to \eta, p_{1,\eta}, \dots, p_{m,\eta})\) such that \(\mathcal X\) is non-singular.
Then the central fiber \((X, \overline p_{1}, \dots, \overline p_m)\) is a semi-stable pointed rational curve.
The linear series \(S_\eta\) yields a refined limit linear series \(\mathcal S\) on \(\mathcal X\) (use the implication \(3 \implies 1\) of \cite[Proposition~2.5]{eis.har:86}).
Let \(S\) be the reduction of \(\mathcal S\) to the central fiber.

We record the ramification profile of the limit linear series \(S\) using a decorated graph.
We begin with the dual graph \(\Gamma\) of the pointed curve \((X, \overline p_1, \dots, \overline p_m)\).
Recall that \(\Gamma\) is obtained by drawing a vertex \(\nu\) for every irreducible component \(X_\nu\) of \(X\), an edge \(\mu\) to \(\nu\) for every node of \(X\) on \(X_\mu\) and \(X_\nu\), and a half-edge \(i\) on \(\nu\) if the marked point \(\overline p_i\) lies on \(X_\nu\).
We think of the full-edge from \(\mu\) to \(\nu\) as a combination of two half-edges, one at \(\mu\) and one at \(\nu\).
We use the term \emph{dangling half-edge} when we refer to a true half-edge, and not a part of a full-edge.
On \(\Gamma\) we record the ramification information of \(S\) as follows.
We label a half-edge \(e\) incident to a vertex \(\nu\) by the vanishing sequence of the \(X_\nu\)-aspect of \(S\) at the point of \(X_\nu\) represented by \(e\).
If \(e\) is a full edge, joining \(\mu\) and \(\nu\) say, then it gets two labels, one at \(\mu\), say \((a_0, \dots, a_r)\), and one at \(\nu\), say \((b_0, \dots, b_r)\).
Since \(S\) is a refined series, the two labels are complementary; that is, they satisfy \(a_i + b_{r-i} = d\).
\Cref{fig:dualgraph} shows an example of a labelled dual graph \(\Gamma\) arising from a limit linear series of rank \(2\) and degree \(4\).

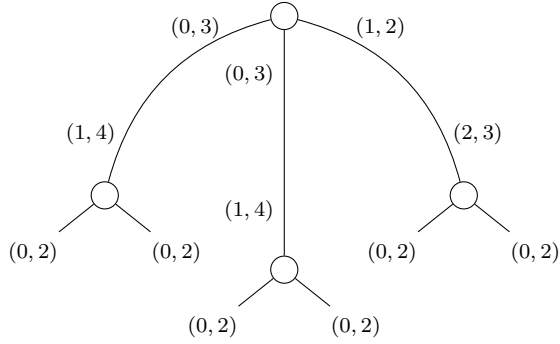
\begin{figure}
  \centering
  \begin{tikzpicture}
    \node[draw, circle] (A) {};
    \node[draw, circle] [below left=3cm of A] (B) {};
    \node[draw, circle] [below=3cm of A] (C) {};
    \node[draw, circle] [below right=3cm of A] (D) {};
    \node[below left=0.5cm of B] (B1) {\scriptsize \((0,2)\)};
    \node[below right=0.5cm of B] (B3) {\scriptsize \((0,2)\)};
    \node[below left=0.5cm of C] (C1) {\scriptsize \((0,2)\)};
    \node[below right=0.5cm of C] (C3) {\scriptsize \((0,2)\)};
    \node[below left=0.5cm of D] (D1) {\scriptsize \((0,2)\)};
    \node[below right=0.5cm of D] (D3) {\scriptsize \((0,2)\)};
        
    \draw (A) edge[bend right=30]
    node[left=.2cm, pos=0.1] {\scriptsize \((0,3)\)}
    node[left, pos=0.8] {\scriptsize \((1,4)\)} (B);
    \draw (A) edge
    node[left, pos=0.2] {\scriptsize \((0,3)\)} 
    node[left, pos=0.8] {\scriptsize \((1,4)\)} (C);
    \draw (A) edge[bend left=30]
    node[right=0.3cm, pos=0.1] {\scriptsize \((1,2)\)}
    node[right, pos=0.8] {\scriptsize \((2,3)\)} (D);

    \draw (B) edge (B1) edge (B3);
    \draw (C) edge (C1) edge (C3);
    \draw (D) edge (D1) edge (D3);    
  \end{tikzpicture}
    \caption{
    We represent the ramification information of a refined limit \(g^r_d\) by a labelled dual graph as above, where \(r = 1\) and \(d = 4\).
    The labels on the half-edges are vanishing sequences, and the two labels on a full-edge are complementary.
  }\label{fig:dualgraph}
\end{figure}

For a vertex \(\nu\) of \(\Gamma\), let \(S(\nu)\) be the \(X_\nu\)-aspect of the limit linear series \(S\).
\begin{theorem}\label{thm:degeneration}
  In the notation above, let \(\Sigma\) be the cycle on \(\Gr(r+1, \Sym^d\k^2)\) obtained as the limit of the cycle \(\WOrb(S_\eta)\).
  Then we have the equality of \(\GL(2)\)-invariant cycles 
  \[
    \Sigma = \sum_\nu \WOrb(S(\nu)).
  \]
\end{theorem}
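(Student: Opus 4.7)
The plan is to combine \Cref{prop:manylimits} with a class-level computation based on \Cref{thm:realmain}, and then to use that effective cycles on the Grassmannian are detected by their class. Write $W = \k^2$.

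First I would verify the equality of equivariant classes
\[
  \sum_{\nu} [\WOrb(S(\nu))] = [\WOrb(S_\eta)] = [\Sigma],
\]
where the second equality is flat specialisation combined with the fact that $[\WOrb]$ is constant along the family by \Cref{thm:realmain}. Each aspect $S(\nu)$ is a linear series of rank $r$ and degree $d$ on $X_\nu \cong \P^1$, and the set $B'_\nu$ of special points on $X_\nu$ (dangling and non-dangling alike) can replace $B_\nu$ in the formula of \Cref{thm:realmain} at no cost, because a point with generic vanishing sequence contributes $\psi_{0,\dots,r}$ to the sum and simultaneously reduces the coefficient $2 - |B|$ by one. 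Summing over $\nu$, the identity
\[
  \psi_{a_0,\dots,a_r}(\mu;\omega) + \psi_{d - a_r, \dots, d - a_0}(\mu;\omega) = 0,
\]
obtained by substituting $i \mapsto d - i$ in the definition of $\phi$, makes the contributions from the two ends of every full edge cancel. Using $|V(\Gamma)| - |E(\Gamma)| = 1$ (the dual graph of a rational semi-stable curve is a tree), the constant-term totals collapse to $(2 - m)\psi_{0,\dots,r}$, and only the dangling half-edge terms survive. What remains is precisely the formula for $[\WOrb(S_\eta)]$.

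Next, for each vertex $\nu$ I construct $g_\nu \in \PGL(W)(\eta)$ from the birational contraction $\pi_\nu \colon \mathcal X \to \P W_\Delta$ that collapses every component of $X$ other than $X_\nu$: comparing $\pi_\nu$ with a fixed trivialisation $\mathcal X_\eta \cong \P W_\eta$ picks out a unique element of $\PGL(W)(\eta)$. By the definition of the refined limit linear series (see \cite{eis.har:86,oss:06}), the translated family $g_\nu \cdot S_\eta$ specialises in $\Gr(r+1, \Sym^d W)$ to a subspace in the $\GL(W)$-orbit of $S(\nu)$. For $\mu \neq \nu$, the central fibre of the rational self-map $\pi_\mu \circ \pi_\nu^{-1}$ of $\P W_\Delta$ collapses all of $\P W$ to the single point where $\pi_\mu$ contracts $X_\nu$, so $g_\mu g_\nu^{-1}$ does not extend to a section of $\PGL(W)_\Delta$. \Cref{prop:manylimits} then yields that the cycle $\Sigma - \sum_\nu \WOrb(S(\nu))$ is effective. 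Combining with the class computation, this effective cycle has zero equivariant class, and therefore zero non-equivariant class. Since any nonzero effective cycle on the Grassmannian has positive degree against an ample class, it must vanish, which gives $\Sigma = \sum_\nu \WOrb(S(\nu))$.

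The principal obstacle is the geometric identification in the middle step: correctly bookkeeping the line-bundle twist absorbed by the contraction $\pi_\nu$ so that the specialisation of $g_\nu \cdot S_\eta$ genuinely recovers $S(\nu)$ as a subspace of $\Sym^d W$, and handling the degenerate case in which some aspect $S(\nu)$ has a positive-dimensional stabiliser in $\PGL(W)$, in which case $\WOrb(S(\nu)) = 0$ by convention and the corresponding contribution should drop out of both sides consistently with the dimension count in the proof of \Cref{prop:manylimits}.
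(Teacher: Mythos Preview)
Your argument is correct and the overall architecture---effectivity via \Cref{prop:manylimits}, a class-level identity, then ``effective with zero class implies zero''---matches the paper's.  The genuine difference is in how the class-level identity is established.

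The paper does \emph{not} invoke \Cref{thm:realmain}.  Instead it pushes the effective cycle \(D = \Sigma - \sum_\nu \WOrb(S(\nu))\) forward along the finite Wronskian map \(\rho \from \Gr(r+1,\Sym^d\k^2) \to \P\Sym^{(r+1)(d-r)}\k^2\) and checks that \(\deg \rho_*D = 0\) using only the Aluffi--Faber pre-degree formula \(p(m_1,\dots,m_s) = \sum_{i\neq j\neq k} m_im_jm_k\) for orbits of divisors on \(\P^1\).  The telescoping happens at the level of these cubic polynomials in the ramification multiplicities rather than at the level of the \(\psi\)-functions.  Finiteness of \(\rho\) then lifts \(\rho_*D=0\) back to \(D=0\).

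Your route is arguably more internal to the paper: you use the main formula itself, and your key identity \(\psi_{a_0,\dots,a_r} + \psi_{d-a_r,\dots,d-a_0} = 0\) together with \(|V(\Gamma)|-|E(\Gamma)|=1\) is exactly the equivariant refinement of the Aluffi--Faber contraction identity in \Cref{rem:deg}.  The cost is that you must know \Cref{thm:realmain} remains valid when some aspect \(S(\nu)\) has positive-dimensional stabiliser; this is true (the localisation integral computes \(\phi_*[\widetilde{\P}^3]\), which by \eqref{eq:worb} equals \([\WOrb]\) in all cases, and indeed evaluates to \(0\) for a toric series with two complementary ramification points), but it is not stated in the paper and you would need to say a word about it.  The paper's route sidesteps this by working only with degrees on a projective space.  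The ``line-bundle twist'' worry you raise is exactly the content of the limit linear series construction and is handled in the paper the same way you propose, via the contractions \(\mathcal X \to \mathcal X_\nu \cong \P^1_\Delta\).
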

\begin{proof}
  We first show that the cycle
  \[ D = \Sigma - \sum_{\nu \in \Gamma} \WOrb(S(\nu))\]
  is effective using \Cref{prop:manylimits}.

    For every vertex \(\nu\) of \(\Gamma\), let \(\mathcal X_\nu\) be the surface obtained by blowing down all components of \(X\) except \(X_\nu\).
  Then we have an isomorphism \(\mathcal X_\nu \cong \P^1 \times \Delta\); fix such an isomorphism and identify the two.
  Choose an arbitrary vertex of \(\Gamma\) and call it \(0\).
  For every \(\nu\), let \(g_\nu \in \PGL_2(\eta)\) be the unique element such that 
  \[
    \mathcal X_{0, \eta} = \P^1 \times \eta \xrightarrow{g_\nu} \P^1 \times \eta = \mathcal X_{\nu, \eta}\]
  represents the identity map
  \[\mathcal X_{0,\eta} = \mathcal X_\eta \to \mathcal X_{\nu, \eta} = \mathcal X_\eta.\]
    For \(\mu \neq \nu\), the identity map on the generic fiber does not extend to an isomorphism \(\mathcal X_\mu \to \mathcal X_\nu\).
  Therefore, \(g_\mu g_\nu^{-1} \in \PGL_2(\eta)\) does not extend to an element of \(\PGL_2(\Delta)\).
  On \(\mathcal X_\nu = \P^1 \times \Delta\), the limit of \([S_\eta] \from \eta \to \Gr(r+1,\Sym^d\k^2)\) is \([S(\nu)] \in \Gr(r+1,\Sym^d\k^2).\)
  Now, \Cref{prop:manylimits} applied to \(M = \Gr(r+1, \Sym^d\k^2)\) and \(G = \PGL_2\) implies that
  \[ D = \Sigma - \sum_\nu \WOrb(S(\nu))\]
  is effective.

  Let \(N = (d-r)(r+1)\).
  We apply the Wronskian map
  \[\rho \from \Gr(r+1, \Sym^d\k^2) \to \P\Sym^N\k^2\]
  and see that the cycle
  \[
    \rho_* D = \rho_* \Sigma - \sum_\nu \rho_* \WOrb(S(\nu))
  \]
  is effective.
  Let \(R_\eta\) be the ramification divisor of \(S_\eta\) and \(R(\nu)\) the ramification divisor of \(S(\nu)\).
  Then the cycle \(\rho_*\Sigma\) on \(\P \Sym^N \k^2\) is the limit of the cycle \(\WOrb(R_\eta)\) and \(\rho_*\WOrb(S(\nu))\) is the cycle \(\WOrb(R(\nu))\).
  Aluffi and Faber compute the degree of the weighted orbit closure for any divisor on \(\P^1\) (see \cite[Proposition~1.3]{alu.fab:93*1} and note that in their paper this degree is called the ``pre-degree'').
  Using their formula, it is straightforward to check that \(\deg \rho_* D = 0\) (see \Cref{rem:deg}).
  Since \(\rho_*D\) is effective, we conclude that \(\rho_*D = 0\) and since \(\rho\) is finite, that \(D = 0\).
\end{proof}
\begin{remark}\label{rem:deg}
  We indicate how to check that the pre-degree of the orbit of \(R_\eta\) is the sum of the pre-degrees of the orbits of \(R(\nu)\) using \cite[Proposition~1.3]{alu.fab:93*1}.
  The pre-degree of the orbit closure of a divisor depends only on the multiplicities of the points in the divisor.
  More precisely, by \cite[Proposition~1.3]{alu.fab:93*1}, the pre-degree for a divisor of degree \(N\) with multiplicities \(m_1, \dots, m_s\) is given by
  \begin{equation}\label{eq:ijk}
    \begin{split}
      p(m_1, \dots, m_s) &= N^3 - 3N \sum m_i^2 + 2 \sum m_i^3\\
      &= \sum_{i \neq j \neq k}m_im_jm_k.
    \end{split}
  \end{equation}
  To understand the multiplicities in the divisors \(R(\nu)\), consider the labelled dual graph \(\Gamma\) and replace the vanishing sequences by their weights, where the weight of the sequence \((a_0, \dots, a_r)\) is \(\sum_i (a_i-i)\).
  Then the multiplicities in \(R_\eta\) are the labels of the dangling half-edges and the multiplicities of \(R(\nu)\) are the labels of the half-edges incident to \(\nu\).
  The sum of the labels of all the half-edges incident to a vertex is \(N = (d-r)(r+1)\), and the sum of the two labels of a full-edge is also \(N\).
  Consider the operation of contracting an edge of \(\Gamma\) as shown below
  \[
    \begin{tikzpicture}
      \draw (0,0) node[draw, circle] (X) {};
      \draw (-1,-0.5) node (X1) {\scriptsize \(m_1\)} (0,-0.5) node (X2) {\dots} (1,-0.5) node (X3) {\scriptsize \(m_s\)};
      \draw (X) edge (X1) edge (X3);
  
      \begin{scope}[xshift=-6.5cm]
        \draw (0,0) node[draw, circle] (A) {} ; 
        \draw (-0.7,-0.5) node (A1) {\scriptsize \(m_1\)} (0,-0.5) node (A2) {\dots} (0.7,-0.5) node (A3) {\scriptsize \(m_\ell\)};
        \draw (A) edge (A1) edge (A3);
      \end{scope}
      \begin{scope}[xshift=-4cm]
        \draw (0,0) node[draw, circle] (B) {} ; 
        \draw (-0.7,-0.5) node (B1) {\scriptsize \(m_{\ell+1}\)} (0,-0.5) node (B2) {\dots} (0.7,-0.5) node (B3) {\scriptsize \(m_s\)};
        \draw (B) edge (B1) edge (B3);
      \end{scope}
      \draw (A) edge
      node[above, pos=0.2] {\scriptsize \(v\)}
      node[above, pos=0.8] {\scriptsize \(w\)}
      (B);
      \begin{scope}[xshift=-2cm]
        \node {\(\leadsto\)};
      \end{scope}
    \end{tikzpicture}.
  \]
  Since
  \[N = v+w = m_1 + \dots + m_\ell + v = m_{\ell+1} + \dots + m_s + w, \]
  we have \(v = m_{\ell+1} + \dots + m_s\) and \(w = m_1 + \dots + m_{\ell}\).
  From \eqref{eq:ijk} it follows that
  \[ p(m_1, \dots, m_\ell, v) + p(m_{\ell+1}, \dots, m_s, w) = p(m_1,\dots, m_s).\]
  By contracting the edges one by one, we reduce the sum of the pre-degrees for \(R(\nu)\) to a single sum, which is the pre-degree for \(R_\eta\).
\end{remark}
\begin{corollary}\label{thm:spec0}
  In the setup of \Cref{thm:degeneration}, in \(A^*_{\GL(2)}(\Gr(r+1,\Sym^d\k^2))\) we have
  \[ [\WOrb(S_\eta)] = \sum_\nu [\WOrb(S(\nu))].\]
\end{corollary}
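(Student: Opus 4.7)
The plan is to deduce this class-level identity from the cycle-level identity already produced by \Cref{thm:degeneration}. The only additional ingredient is the fact that flat specialisation preserves equivariant Chow classes, which I will invoke in a standard way.

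First I would package the data into a single flat family. Recall that in \Cref{prop:manylimits} and \Cref{thm:degeneration}, the cycle \(\Sigma\) arose as the push-forward \({\pi}_*Y_0\) of the special fibre of an irreducible \(\GL(2)\)-invariant subvariety \(Y \subset X^n \times \Gr(r+1,\Sym^d\k^2) \times \Delta\) that is flat over \(\Delta\) of relative dimension \(\dim \GL(2)\), and such that \({\pi}_*Y_\eta = \WOrb(S_\eta)\). So \({\pi}_*Y\) is a \(\GL(2)\)-equivariant cycle on \(\Gr(r+1,\Sym^d\k^2) \times \Delta\) whose generic fibre is \(\WOrb(S_\eta)\) and whose special fibre is \(\Sigma\).

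Next, I would apply the equivariant specialisation principle. Since \(\Delta\) is the spectrum of a DVR and \(\GL(2)\) acts trivially on \(\Delta\), the two inclusions of fibres \(\Gr(r+1,\Sym^d\k^2)_\eta \hookrightarrow \Gr(r+1,\Sym^d\k^2)_\Delta\) and \(\Gr(r+1,\Sym^d\k^2) \hookrightarrow \Gr(r+1,\Sym^d\k^2)_\Delta\) induce isomorphisms of \(\GL(2)\)-equivariant Chow groups (by \(\A^1\)-homotopy invariance of equivariant Chow and the fact that restriction to the generic point is an isomorphism in the relevant codimensions). Consequently, for any cycle \(Z\) on \(\Gr(r+1,\Sym^d\k^2)_\Delta\) that is flat over \(\Delta\), the classes of \(Z_\eta\) and \(Z_0\) coincide under these identifications. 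Applying this to \({\pi}_*Y\) yields
\[ [\WOrb(S_\eta)] = [\Sigma] \quad \text{in } A^*_{\GL(2)}(\Gr(r+1,\Sym^d\k^2)). \]

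Finally, I would substitute the cycle identity \(\Sigma = \sum_\nu \WOrb(S(\nu))\) from \Cref{thm:degeneration} to conclude
\[ [\WOrb(S_\eta)] = \sum_\nu [\WOrb(S(\nu))]. \]
The only point that requires any care is the equivariant specialisation step; this is standard and ultimately reduces to the fact that flat families give rationally equivalent classes on the base of a DVR, compatibly with the group action since the whole family is \(\GL(2)\)-invariant. No further obstacle is anticipated.
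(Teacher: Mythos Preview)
Your proposal is correct and follows the same approach as the paper: the paper's proof is simply the one-line observation that \([\WOrb(S_\eta)] = [\Sigma]\) (since \(\Sigma\) is by definition the flat limit of \(\WOrb(S_\eta)\)), followed by invoking \Cref{thm:degeneration}. You have merely unpacked the equivariant specialisation step that the paper leaves implicit.
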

\begin{proof}
  Since \([\WOrb(S_\eta)] = [\Sigma]\), the statement follows from \Cref{thm:degeneration}.
\end{proof}

Recall that the class \(\WOrb(S)\) depends only on the ramification profile of \(S\).
The ramification profile of \(S_\eta\) is recorded by the labels on the dangling half-edges of \(\Gamma\).
The ramification profile of \(S(\nu)\) is recorded by the labels on the half-edges incident to \(\nu\).
Thus, the equality in \Cref{thm:spec0} can be read off purely from the labelled graph \(\Gamma\).

A natural question is: which labelled graphs \(\Gamma\) arise from degenerations of linear series?
Here, the obvious necessary conditions are also sufficient.
Let \(\Gamma\) be a tree with \(m\) dangling half-edges.
Suppose all the half-edges of \(\Gamma\), including the full-edges considered as two half-edges, are labelled with increasing \((r+1)\)-tuples of integers in \(\{0,\dots,d\}\).
Assume that the two labels on a full-edge are complementary.
\begin{proposition}\label{prop:existence}
  Let \(\Gamma\) be a labelled graph as above.
  Suppose for every vertex \(\nu\) of \(\Gamma\), there is a linear series \(S(\nu)\) of rank \(r\) and degree \(d\) on \(\P^1\) whose ramification profile agrees with the multi-set of labels of half-edges incident to \(\nu\).
  Then \(\Gamma\) arises from a degeneration.
  That is, there exists a \(\Delta\), a semi-stable \(m\)-pointed curve \((\mathcal X \to \Delta, p_1, \dots, p_m)\) and a refined linear series \(\mathcal S\) on \(\mathcal X\) such that the labelled dual graph of its central fiber is \(\Gamma\).
\end{proposition}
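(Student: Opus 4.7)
The plan is to realise $\Gamma$ as the dual graph of a refined limit linear series on a nodal curve, and then to smooth this series to a family of linear series on nearby smooth curves.

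\emph{Construction of the central fibre.} For each vertex $\nu$ of $\Gamma$, take a copy of $\P^1$ denoted $X_\nu$. The given series $S(\nu)$ has its ramification profile equal to the multi-set of labels on the half-edges incident to $\nu$, so I choose a bijection between those ramification points of $S(\nu)$ and the half-edges at $\nu$. For each full edge joining $\mu$ and $\nu$, glue the corresponding ramification points of $X_\mu$ and $X_\nu$ to form a node; for each dangling half-edge labelled $i$, mark the corresponding ramification point as $\overline p_i$. Because $\Gamma$ is a tree, the resulting pointed nodal curve $(X, \overline p_1, \dots, \overline p_m)$ is of compact type.

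\emph{Assembly of a refined limit linear series.} The collection $\{S(\nu)\}$ defines a candidate limit linear series on $X$, with $X_\nu$-aspect $S(\nu)$. At a node joining $X_\mu$ and $X_\nu$, the vanishing sequences of $S(\mu)$ and $S(\nu)$ are the two labels on the corresponding full edge, and by hypothesis these labels are complementary. This is precisely the condition for the data $\{S(\nu)\}$ to assemble into a \emph{refined} limit $g^r_d$ in the sense of \cite{eis.har:86}.

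\emph{Smoothing.} Since $X$ is a tree of $\P^1$'s, any desired smoothing $\mathcal X \to \Delta$ of $X$ exists; extend it to a pointed family $(\mathcal X \to \Delta, p_1, \dots, p_m)$ whose sections $p_i$ specialise to $\overline p_i$. The hypothesis \eqref{eqn:w} says $\sum_{i,j}(a_i(j) - i) = (r+1)(d-r)$, which is precisely the statement that the Brill--Noether number $\rho$ for linear series on $\P^1$ with the prescribed ramification profile vanishes. By the Eisenbud--Harris smoothing theorem for refined limit linear series on curves of compact type (see \cite{eis.har:86}; valid under our characteristic hypothesis by \cite{oss:06}), since $\rho \geq 0$, our refined limit series on $X$ deforms, possibly after a finite base change on $\Delta$, to a linear series $\mathcal S$ on $\mathcal X$ whose restriction to each smooth generic fibre has the ramification profile dictated by the dangling half-edges. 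The labelled dual graph of the central fibre of the resulting family is $\Gamma$, as required.

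\emph{Main obstacle.} The essential input is the Eisenbud--Harris smoothing theorem for refined limit linear series, which requires $\rho \geq 0$; the hypothesis \eqref{eqn:w} gives exactly $\rho = 0$, the expected-dimensional case. The remaining combinatorial steps --- aligning ramification points of each $S(\nu)$ with the nodes of $X$, verifying that the complementarity of labels matches the definition of a refined series, and smoothing a tree of $\P^1$'s node by node --- are routine.
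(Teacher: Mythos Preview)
Your approach matches the paper's: build the nodal curve with dual graph \(\Gamma\), assemble the \(S(\nu)\) into a refined limit \(g^r_d\), and invoke the Eisenbud--Harris smoothing theorem \cite[Theorem~3.4]{eis.har:86}.

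There is one genuine gap in your invocation of the smoothing theorem. The theorem does not apply to an arbitrary refined limit series merely because \(\rho \geq 0\); its hypothesis is that the limit series lie in a component of \(G^r_d(X,\overline p_1,\dots,\overline p_m)\) of dimension exactly \(\rho\). Every component has dimension at least \(\rho\), so what you must check is that your assembled series \(S\) is an \emph{isolated} point of this scheme. Your sentence ``since \(\rho \geq 0\), our refined limit series on \(X\) deforms'' elides precisely this point. The paper fills it in with one observation: the labels at each vertex \(\nu\) prescribe the entire ramification divisor of the aspect \(S(\nu)\), and since the Wronskian map \(\Gr(r+1,\Sym^d\k^2) \to \P\Sym^{(r+1)(d-r)}\k^2\) is finite (\Cref{prop:wronskimap}), each aspect is determined up to finitely many choices; hence \(S\) is isolated. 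With that line added, your argument is complete and coincides with the paper's.
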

\begin{proof}
  The proof is a simple application of the smoothing theorem for \(g^r_d\)'s \cite[Theorem~3.4]{eis.har:86}.
  
  Let \((X, \overline p_1, \dots, \overline p_m)\) be a semi-stable curve with dual graph \(\Gamma\).
  We assemble the linear series \(S(\nu)\) together to get a refined limit linear series \(S\) on \(X\).
  Note that the expected dimension of the space \(G^r_d\) on a rational curve with a prescribed ramification divisor is \(0\).
  Since the Wronskian map is finite and the ramification divisors of all aspects of \(S\) are prescribed, \(S\) is an isolated point of \(G^r_d(X, \overline p_1, \dots, \overline p_m)\).
  By \cite[Theorem~3.4]{eis.har:86}, there exists a \(\Delta\) and a smoothing \((\mathcal X \to \Delta, p_1, \dots, p_m, \mathcal S)\) of \((X, \overline p_1, \dots, \overline p_m, S)\).
\end{proof}

\begin{remark}\label{rem:existence}
  The existence of a \(g^r_d\) on \(\P^1\) with prescribed ramification profile is a Schubert theoretic condition on \(\Gr(r+1, \Sym^d\k^2)\).
  Such a series exists if and only if the product of certain Schubert classes on the Grassmannian is non-zero.
  See the remark after \cite[Theorem~2.3]{eis.har:83} for a precise statement.
\end{remark}

\Cref{thm:degeneration} asserts an equality of \(\GL(2)\)-invariant cycles on \(\Gr(r+1, \Sym^d\k^2)\).
It can be easily upgraded to an equality of \(\GL(r+1) \times \GL(2)\)-invariant cycles on \(\Hom(\k^{r+1}, \Sym^d \k^2)\).
\begin{proposition}\label{prop:degeneration2}
  In the notation of \Cref{thm:degeneration}, let \(s_\eta \colon \Hom(\k^{r+1}, \Sym^d \k^2)_\eta\) be a homomorphism with image \(S_\eta\).
  Let \(s_\nu \in \Hom(\k^{r+1}, \Sym^d\k^2)\) be homomorphisms with images \(S(\nu)\).
  Let \(\Sigma\) be the flat limit of \(\WOrb(s_\eta)\).
  Then, on \(\Hom(\k^{r+1}, \Sym^d\k^2)\), we have the equality of \(\GL(r+1) \times \GL(2)\)-invariant cycles
  \[ \Sigma = \sum_\nu \WOrb(s_\nu).\]
\end{proposition}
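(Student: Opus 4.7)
The plan is to reduce to \Cref{thm:degeneration} using the principal \(\GL(r+1)\)-bundle \(\pi \colon \Hom(\k^{r+1}, \Sym^d \k^2)^{\mathrm{inj}} \to \Gr(r+1, \Sym^d \k^2)\), together with the effectivity principle of \Cref{prop:manylimits} and a dimension count that eliminates components supported in the non-injective locus.

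First, I would show that \(D := \Sigma - \sum_\nu \WOrb(s_\nu)\) is effective. For each vertex \(\nu\) of \(\Gamma\), let \(g_\nu \in \PGL_2(\eta)\) be the section constructed in the proof of \Cref{thm:degeneration} and fix a lift to \(\GL_2(\eta)\). Since \(\pi\) is a \(\GL(r+1)\)-torsor and \(\Delta\) is the spectrum of a DVR, any \(\eta\)-point of \(\Hom^{\mathrm{inj}}\) whose image in \(\Gr\) extends to \(\Delta\) can itself be extended to a \(\Delta\)-point of \(\Hom^{\mathrm{inj}}\) after modification by a suitable element of \(\GL(r+1)_\eta\). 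Applying this to \(g_\nu s_\eta\), whose image in \(\Gr\) has limit \(S(\nu)\), produces \(h_\nu \in \GL(r+1)_\eta\) such that \((h_\nu, g_\nu)\cdot s_\eta = g_\nu s_\eta h_\nu^{-1}\) admits a limit in \(\Hom\); this limit is an injective homomorphism with image \(S(\nu)\) and hence lies in the orbit of \(s_\nu\). For \(\nu \neq \nu'\), the product \((h_\nu, g_\nu)(h_{\nu'}, g_{\nu'})^{-1}\) has second component \(g_\nu g_{\nu'}^{-1}\), which does not extend to \(\Delta\). Applying \Cref{prop:manylimits} then yields the effectivity of \(D\).

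Next, I would observe that \(D\) vanishes on \(\Hom^{\mathrm{inj}}\). Because \(\pi\) is flat, pullback along \(\pi\) commutes with flat specialization over \(\Delta\). Hence the flat limit in \(\Hom^{\mathrm{inj}} \times \Delta\) of the family \(\WOrb(s_\eta) = \pi^{-1}(\WOrb(S_\eta))\) is the \(\pi\)-pullback of the flat limit of \(\WOrb(S_\eta)\) in \(\Gr \times \Delta\), which by \Cref{thm:degeneration} equals \(\sum_\nu \WOrb(S(\nu))\). Pulling back gives \(\sum_\nu \WOrb(s_\nu) \cap \Hom^{\mathrm{inj}}\), so \(D \cap \Hom^{\mathrm{inj}} = 0\) and \(D\) is supported on the non-injective locus.

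The main obstacle is to upgrade this to \(D = 0\) on all of \(\Hom\). Here I would use a dimension count. The cycle \(\Sigma\) is the special fiber of the flat family \(|\Gamma|\cdot\overline{\Orb(s_\eta)} \subset \Hom \times \Delta\), so it is pure of dimension \(\dim \Orb(s_\eta) = (r+1)^2 + 3\). Each nonzero \(\WOrb(s_\nu)\) is also pure of this dimension: \(\WOrb(s_\nu) \neq 0\) forces \(\Stab_{\PGL_2}(S(\nu))\) to be finite, and then \(\dim \Orb(s_\nu) = (r+1)^2 + 3\). Being the effective difference of two pure cycles of this common dimension, \(D\) is itself pure of dimension \((r+1)^2 + 3\). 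Since \(G = \GL(r+1) \times \GL_2\) is connected, every irreducible component of the support of \(D\) is \(G\)-invariant, hence the closure of a \(G\)-orbit of some non-injective \(m\) of rank \(k \leq r\). For such \(m\), the projection \(\Stab_G(m) \to \GL_2\) has image containing the central \(\Gm\) and kernel \(\{h \in \GL(r+1) \mid (h-1)(\k^{r+1}) \subset \ker m\}\) of dimension \((r+1)(r+1-k) \geq r+1\), so \(\dim \Stab_G(m) \geq r + 2\) and thus \(\dim \Orb_G(m) \leq (r+1)^2 + 2 - r < (r+1)^2 + 3\). This contradicts the purity of \(D\), forcing \(D = 0\).
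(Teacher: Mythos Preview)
Your final step has a genuine gap. The inference ``every irreducible component of the support of \(D\) is \(G\)-invariant, hence the closure of a \(G\)-orbit'' is invalid: a \(G\)-invariant irreducible closed subvariety need not be an orbit closure. Your bound \(\dim \Orb_G(m) \leq (r+1)^2 + 2 - r\) for non-injective \(m\) is correct, but a \(G\)-invariant irreducible subvariety of the non-injective locus can be a union of infinitely many such small orbits and still have dimension \((r+1)^2+3\). Concretely, the non-injective locus itself is \(G\)-invariant and irreducible of dimension \((r+1)(d+1)-(d-r+1)\), which already meets or exceeds \((r+1)^2+3\) once \(rd \geq r^2+4\) (e.g.\ \(r=1,\ d \geq 5\)). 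So nothing in your argument excludes a component of \(D\) of the required dimension lying in the non-injective locus. There is also a smaller wrinkle in step~1: with \(G = \GL(r+1)\times\GL_2\) every stabiliser contains the one-parameter subgroup \(\{(s^{d}\cdot\mathrm{id},\, s\cdot\mathrm{id})\}\), so the abstract \(\WOrb\) of \Cref{prop:manylimits} is identically zero; you must pass to \(G/\Gm\) to match the Section~\ref{sec:spec} convention for \(\WOrb\), but you do not address this.

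The paper bypasses both the effectivity step and the orbit dimension count. After establishing (as you do) that the equality holds on \(U\), it shows directly that \(\supp \Sigma\) equals the closure of \(\supp \Sigma \cap U\): given any \(x \in \supp \Sigma\), one writes \(x\) as the limit of some \(x_\eta \in \Orb(s_\eta)\), lets \(S \in \Gr\) be the limit of \(\im(x_\eta)\), completes \(x\) to an injective \(s'\) with image \(S\) (so that \(s' \in \pi^{-1}(S) \subset \supp\Sigma \cap U\)), and finally degenerates \(s'\) back to \(x\) via a one-parameter subgroup of \(\GL(r+1)\) that kills the added basis vectors. This construction is what replaces your dimension count and is the missing idea in your approach.
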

\begin{proof}
  Let \(U \subset \Hom(\k^{r+1}, \Sym^d\k^2)\) be the open subset of injective homomorphisms.
  The action of \(\GL(r+1)\) on \(U\) is free and the map
  \[\im \colon U \to \Gr(r+1, \Sym^d\k^2)\]
  taking a homomorphism to its image is the quotient by \(\GL(r+1)\).
  On \(U\), the assertions follow from \Cref{thm:degeneration}.

  It suffices to prove that \(\Sigma\) does contain any \((r+1)^2+3\)-dimensional components in the complement of \(U\).
  We claim that \(\Sigma\) is the closure of \(\Sigma \cap U\), from which the statement follows.

  To see this, take \(x \in \Sigma\).
  Then, possibly after a base change, we can choose \(x_\eta \in \Hom(\k^{r+1}, \Sym^d\k^2)_\eta\) in the \(\GL(r+1) \times \GL(2)\)-orbit of \(s_\eta\) such that the flat limit of \(x_\eta\) is \(x\).
  Let \(S \subset \Sym^d\k^2\) be the \((r+1)\)-dimensional subspace corresponding to the flat limit of \(\im(x_\eta)\) in \(\Gr(r+1, \Sym^d\k^2)\).
  Then it is easy to see that \(S\) contains the image of \(x\).
  Let \(\ell\) be the dimension of the image of \(x\).
  By making a change of coordinates on \(\k^{r+1}\) if necessary, assume that \(x\) sends the last \(r+1-\ell\) standard basis vectors to 0.
  Let \(s' \in \Hom(\k^{r+1}, \Sym^d\k^2)\) be a map that agrees with \(x\) on the first \(\ell\) basis vectors and whose image is \(S\).
  Then \(s' \in \Sigma \cap U\).
  Consider the diagonal one parameter subgroup \(\lambda_t \to \GL(r+1)\) with diagonal entries
  \[ (\underbrace{1, \dots, 1}_{\ell}, \underbrace{t, \dots, t}_{r+1-\ell}).\]
  Then, by construction, \(\lim_{t \to 0} \lambda_t(s') = x\).
  Since \(\Sigma \cap U\) is \(\GL(r+1)\)-invariant, and \(s' \in \Sigma \cap U\), we get that \(\lambda_t(s') \in \Sigma \cap U\) for \(t \neq 0\).
  We conclude that \(x\) lies in the closure of \(\Sigma \cap U\).  
\end{proof}
\begin{corollary}\label{thm:spec}
  In the setup of \Cref{prop:degeneration2}, in \(A^*_{\GL(r+1) \times \GL(2)}(\Hom(\k^{r+1},\Sym^d\k^2))\), we have
  \[ [\WOrb(s_\eta)] = \sum_\nu [\WOrb(s_\nu)].\]
\end{corollary}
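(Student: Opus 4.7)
The plan is to deduce this class-level identity from the cycle-level identity already established in Proposition~\ref{prop:degeneration2}, using the general principle that flat families over a DVR give rise to rationally equivalent fibers. Concretely, set $M = \Hom(\k^{r+1}, \Sym^d\k^2)$ and let $G = \GL(r+1) \times \GL(2)$.

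First, I would construct a $G$-equivariant flat family $\mathcal{W}$ over $\Delta$ interpolating between $\WOrb(s_\eta)$ and $\Sigma$. Take $\mathcal{W} \subset M \times \Delta$ to be the scheme-theoretic closure of the cycle $\WOrb(s_\eta) \subset M_\eta$, where each irreducible component is closed with its reduced structure (multiplied by the appropriate multiplicity coming from $|\Stab|$). Since each component dominates $\Delta$, the family $\mathcal{W} \to \Delta$ is flat. By the very definition of flat limit used in Section~\ref{sec:grass}, the special fiber of $\mathcal{W}$ is exactly $\Sigma$, and its generic fiber is $\WOrb(s_\eta)$. Moreover, the whole construction is $G$-equivariant because $\WOrb(s_\eta)$ is $G$-invariant over $\eta$.

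Second, I would invoke the standard specialization homomorphism on equivariant Chow groups. For a flat family over the DVR $\Delta$, pullback to the generic fiber and pullback to the special fiber give the same class in $A^*_G(M)$; equivalently, the two fibers are rationally equivalent when viewed inside $M$. This is a standard consequence of the definition of specialization as $i_0^* \circ j_{\eta,*}^{-1}$ on Chow groups and extends to the equivariant setting by the Borel construction of Edidin--Graham (approximating $EG$ by smooth finite-dimensional schemes and applying the ordinary specialization there). Thus $[\WOrb(s_\eta)] = [\Sigma]$ in $A^*_G(M)$. Combining with $\Sigma = \sum_\nu \WOrb(s_\nu)$ from Proposition~\ref{prop:degeneration2} and taking equivariant classes term-by-term finishes the proof.

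The only subtlety I anticipate is that $M$ is not proper, so one might worry whether specialization of cycles is well-defined. This is not actually a problem because $\Delta$ is one-dimensional and we are specializing cycles that are already presented as flat families; but if one wanted to be fully formal, the argument may be performed inside a $G$-equivariant compactification $\overline M \supset M$ and then restricted back along the open immersion $M \hookrightarrow \overline M$. No new input beyond Proposition~\ref{prop:degeneration2} and standard equivariant intersection theory is needed.
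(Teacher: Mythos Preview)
Your proposal is correct and follows essentially the same approach as the paper: the paper's proof is the one-liner ``Since \([\WOrb(s_\eta)] = [\Sigma]\), the statement follows from \Cref{prop:degeneration2},'' and you have simply spelled out in detail why the flat limit \(\Sigma\) has the same equivariant class as the generic fiber. Your additional care about specialization in the equivariant setting and the non-properness of \(M\) is welcome but not strictly needed here, since the paper takes this standard fact for granted.
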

\begin{proof}
  Since \([\WOrb(s_\eta)] = [\Sigma]\), the statement follows from \Cref{prop:degeneration2}.
\end{proof}

Finally, we discuss the specialisation mentioned in the introduction (\Cref{thm:introspec}).
Let \(p_1, \dots, p_m\) be sections \(\Delta \to \P^1 \times \Delta\) that are distinct on the generic fiber but \(p_1 = \dots = p_\ell\) on the central fiber (with no other coincidences).
Let \(S_\eta\) be a linear series on the generic fiber whose ramification points are \(p_1, \dots, p_m\) and whose vanishing sequence at \(p_i\) is \(a(i) = (a_0(i),\dots,a_r(i))\).
Let \(S\) be the flat limit of \(S_\eta\).
It is easy to see that the vanishing sequence of \(S\) at \(p_i\) for \(i > \ell\) is \(a(i)\).
Let \(c = (c_0, \dots, c_{r+1})\) be the vanishing sequence of \(S\) at \(p = p_1 = \dots = p_\ell\).
Let \(c' = (d-c_r, \dots, d-c_0)\) be the complementary sequence.
\begin{proposition}\label{prop:introspec}
  In the setup above, there exists a linear series of rank \(r\) and degree \(d\) on \(\P^1\) with ramification profile \(\{a(1), \dots, a(\ell), c'\}\).
  Furthermore, on \(\Hom(\k^{r+1}, \Sym^d\k^2)\) we have the equality of \(\GL(r+1) \times \GL(2)\)-equivariant classes
  \[ [\WOrb(a(1), \dots, a(m))] = [\WOrb(a(1), \dots, a(\ell), c')] + [\WOrb(a(\ell+1), \dots, a(m), c)].\]
\end{proposition}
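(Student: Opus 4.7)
The plan is to realise this specialisation as a one-parameter degeneration in the sense of Section \ref{sec:spec} and then invoke \Cref{thm:spec}. Starting from $\P^1 \times \Delta$ with sections $p_1, \dots, p_m$, blow up the point where $p_1, \dots, p_\ell$ collide on the central fiber (and, if necessary, perform further blow-ups to separate their strict transforms). This yields a smooth family $\mathcal X \to \Delta$ with semi-stable central fiber $X = X_1 \cup X_2$, where $X_1$ is the strict transform of $\P^1 \times \{0\}$ meeting the sections $p_{\ell+1}, \dots, p_m$, and $X_2$ is the exceptional component meeting $p_1, \dots, p_\ell$. The two components meet at a single node $q$, so the dual graph $\Gamma$ has two vertices joined by one full edge, with $m-\ell$ dangling half-edges at $X_1$ and $\ell$ dangling half-edges at $X_2$.

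By \cite[Proposition~2.5]{eis.har:86}, the series $S_\eta$ extends to a refined limit linear series $\mathcal S$ on $\mathcal X$ with aspects $S(X_1)$ and $S(X_2)$. Since contracting $X_2$ recovers the original family $\P^1 \times \Delta$ together with the flat limit $S$, the aspect $S(X_1)$ is identified with $S$, so its vanishing sequence at $q$ is $c$ by the definition of $c$; by refinedness, the vanishing sequence of $S(X_2)$ at $q$ is then $c'$. The vanishing of each aspect at the remaining labelled points is at least (componentwise) the corresponding $a(i)$, by lower semicontinuity. The crucial step is to verify that no additional ramification appears. By the Plücker formula, each aspect has total ramification weight $(r+1)(d-r)$. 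Setting $w(a) = \sum_j (a_j - j)$, a direct computation gives
\[ w(c) + w(c') = \left(\sum_j c_j - \tfrac{r(r+1)}{2}\right) + \left((r+1)d - \sum_j c_j - \tfrac{r(r+1)}{2}\right) = (r+1)(d-r). \]
Combined with the hypothesis $\sum_{i=1}^m w(a(i)) = (r+1)(d-r)$, the sum of the claimed weights on $S(X_1)$ and $S(X_2)$ is exactly $2(r+1)(d-r)$, forcing equality on each aspect. Hence $S(X_1)$ and $S(X_2)$ have ramification profiles exactly $\{a(\ell+1), \dots, a(m), c\}$ and $\{a(1), \dots, a(\ell), c'\}$, respectively; in particular $S(X_2)$ is the required $g^r_d$ on $\P^1$, proving the existence claim.

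Finally, apply \Cref{thm:spec} to $\Gamma$: the dangling half-edges at $X_1$ are labelled $a(\ell+1), \dots, a(m)$ together with the $c$-side of the central edge, while those at $X_2$ are labelled $a(1), \dots, a(\ell)$ together with the $c'$-side. This gives the equality $[\WOrb(s_\eta)] = [\WOrb(S(X_1))] + [\WOrb(S(X_2))]$ in the $\GL(r+1) \times \GL(2)$-equivariant Chow ring, which is precisely the claimed identity. The main technical obstacle is the weight-counting argument that rules out extra ramification on either aspect; the rest is a routine application of the refined limit linear series machinery already invoked in the proof of \Cref{thm:degeneration}.
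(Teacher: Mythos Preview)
Your overall strategy---resolve to a semi-stable model, identify the aspects, and invoke \Cref{thm:spec}---is the right idea, and your weight-counting argument is correct and pleasant when it applies. The gap is that it does not always apply. You acknowledge that further blow-ups may be needed to separate the strict transforms of \(p_1,\dots,p_\ell\), but then assert that the central fibre is \(X_1 \cup X_2\) with a single exceptional component. In general this is false: if the sections approach the collision point with higher-order contact, the semi-stable model has a \emph{tree} \(T\) of exceptional components attached to \(X_1\). Then there is no single aspect ``\(S(X_2)\)'', and \Cref{thm:spec} applied to the actual dual graph yields
\[
  [\WOrb(s_\eta)] = [\WOrb(a(\ell+1),\dots,a(m),c)] + \sum_{\nu \in T}[\WOrb(S(\nu))],
\]
which is not yet the claimed identity. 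Your weight count does not collapse the sum over \(T\) to one term, nor does it produce a single \(g^r_d\) on \(\P^1\) with profile \(\{a(1),\dots,a(\ell),c'\}\). You also cannot sidestep this by replacing the given family with a ``nicer'' one in which one blow-up suffices, because \(c\) is defined via the flat limit in the \emph{given} family.

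The paper closes the gap with the smoothing theorem (\Cref{prop:existence}). First it applies \Cref{prop:existence} to the subtree \(T\) together with its dangling half-edges \(a(1),\dots,a(\ell)\) and the newly dangling half-edge labelled \(c'\); a geometric generic fibre of the resulting smoothing is exactly the \(g^r_d\) with profile \(\{a(1),\dots,a(\ell),c'\}\), proving the existence claim. Second, now that linear series exist for both profiles \(\{a(1),\dots,a(\ell),c'\}\) and \(\{a(\ell+1),\dots,a(m),c\}\), it applies \Cref{prop:existence} again to manufacture a genuinely two-vertex degeneration and then runs \Cref{thm:spec} on that. Your argument is precisely the two-component special case; inserting the smoothing step for \(T\) would make it complete.
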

\begin{proof}
  The existence of the linear series can be deduced using Schubert calculus.
  We give an alternate geometric proof using the smoothing theorem.

  Consider the pointed stable limit of \((\P^1_\eta, p_1, \dots, p_m)\).
  The dual graph of limit, together with the labelling given by the limit linear series, has the form
  \[
    \begin{tikzpicture}
      \node[draw,circle] (A) {\(T\)};
      \node[draw,circle] (B)[right=3cm of A] {};
      \node[below left=0.3cm of A] (A1) {\scriptsize \(a(1)\)};
      \node [below=0.3cm of A] (A2) {\scriptsize \dots};
      \node [below right=0.3cm of A] (A3) {\scriptsize \(a(\ell)\)};
      \draw (A) edge (A1) edge (A2) edge (A3);
      \node[below left=0.3cm of B] (B1) {\scriptsize \(a(\ell+1)\)};
      \node [below=0.3cm of B] (B2) {\scriptsize \dots};
      \node [below right=0.3cm of B] (B3) {\scriptsize \(a(m)\)};
      \draw (B) edge (B1) edge (B2) edge (B3);
      \draw (A) edge
      node[above, pos=0.2] {\scriptsize \(c'\)}
      node[above, pos=0.8] {\scriptsize  \(c\)}
      (B);
    \end{tikzpicture}.
  \]
  In this diagram, \(T\) is a labelled tree with dangling half-edges labelled \(a(1), \dots, a(\ell)\), and one of its vertices, say \(t\), is connected to the remaining vertex, say \(o\), by an edge that has the label \(c'\) near \(t\) and \(c\) near \(o\).
  We apply \Cref{prop:existence} to \(T\) together with the dangling edges labelled \(a(1), \dots, a(\ell)\) and \(c'\).
  A geometric general fiber of the asserted degeneration gives a linear series of rank \(r\) and degree \(d\) on \(\P^1\) with ramification profile \(\{a(1), \dots, a(\ell), c'\}\).

  By applying \Cref{prop:existence} to the two vertex graph
  \[
    \begin{tikzpicture}
      \node[draw,circle] (A) {};
      \node[draw,circle] (B)[right=3cm of A] {};
      \node[below left=0.3cm of A] (A1) {\scriptsize \(a(1)\)};
      \node [below=0.3cm of A] (A2) {\scriptsize \dots};
      \node [below right=0.3cm of A] (A3) {\scriptsize \(a(\ell)\)};
      \draw (A) edge (A1) edge (A2) edge (A3);
      \node[below left=0.3cm of B] (B1) {\scriptsize \(a(\ell+1)\)};
      \node [below=0.3cm of B] (B2) {\scriptsize \dots};
      \node [below right=0.3cm of B] (B3) {\scriptsize \(a(m)\)};
      \draw (B) edge (B1) edge (B2) edge (B3);
      \draw (A) edge
      node[above, pos=0.2] {\scriptsize \(c'\)}
      node[above, pos=0.8] {\scriptsize  \(c\)}
      (B);
    \end{tikzpicture},
  \]
  we see that the graph above arises from a degeneration.
  We now apply \Cref{thm:spec} to this degeneration.  
\end{proof}

\bibliographystyle{abbrv}

\end{document}